\apptocmd{\sloppy}{\hbadness 10000\relax}{}{}
\newtheorem{theorem}{Theorem}[section]
\newtheorem{corollary}[theorem]{Corollary}
\newtheorem{lemma}[theorem]{Lemma}
\newtheorem{proposition}[theorem]{Proposition}
\theoremstyle{definition}
\newtheorem{definition}[theorem]{Definition}
\theoremstyle{remark}
\newtheorem{remark}[theorem]{Remark}
\newtheorem{example}[theorem]{Example}
\newtheorem*{questions*}{Questions}
\newcommand{\NN}{\mathbb{N}}
\newcommand{\RR}{\mathbb{R}}
\newcommand{\CC}{\mathbb{C}}
\DeclareMathOperator{\adj}{ad}
\newcommand{\aut}{\mathop{\mathrm{Aut}}}
\newcommand{\cont}{\mathcal{C}}
\DeclareMathOperator{\id}{id}
\title[Lie algebra of vector fields on Danielewski surfaces]{On complete generators of certain Lie algebras on Danielewski surfaces}
\author{Rafael B.\ Andrist}
\address{Faculty of Mathematics and Physics, 
University of Ljubljana, Ljubljana, Slovenia}
\email{rafael-benedikt.andrist@fmf.uni-lj.si}
\begin{document}

\begin{abstract}
We study the Lie algebra of polynomial vector fields on a smooth Danielewski surface of the form $x y = p(z)$ with $x,y,z \in \mathbb{C}$. We provide explicitly given generators to show that:
1. The Lie algebra of polynomial vector fields is generated by $6$ complete vector fields. 2. The Lie algebra of volume-preserving polynomial vector fields is generated by finitely many complete vector fields, whose number depends on the degree of the defining polynomial $p$. 3. There exists a Lie sub-algebra generated by $4$ LNDs whose flows generate a group that acts infinitely transitively on the Danielewski surface. The latter result is also generalized to higher dimensions where $z \in \mathbb{C}^N$.
\end{abstract}

\keywords{Danielewski surface, completely integrable vector fields, Andersen--Lempert theory, locally nilpotent derivations, infinitely transitive}

\subjclass{32M17, 32E30, 32Q56, 14R10, 14R20}

\maketitle

\section{Introduction}

Danielewski surfaces have been introduced in the study of the cancellation problem in algebraic geometry, see e.g.\ the survey article by Kraft \cite{MR1423629}. Since then, they have become a well-studied object in algebraic geometry and complex analysis. In particular, the Lie algebra of polynomial vector fields on a Danielewski surface and both the algebraic and holomorphic automorphism group have been investigated from various aspects. 

The notion of infinite transitivity of a group action has first been used by Kaliman and Zaidenberg \cite{MR1669174}. The definition can easily by extended to singular spaces and the holomorphic category \cite{finiteliebis}:

\begin{definition}
Let $X$ be a complex variety and let $G$ be a group acting on $X$ through
(algebraic or holomorphic) automorphisms, then we call the action of $G$ \emph{infinitely transitive} if $G$ acts on the regular part $X_{\mathrm{reg}}$ $m$-transitively for any $m \in \NN$.
\end{definition}

Recently, the study of groups that act infinitely transitively, but are generated by finitely many unipotent groups, has been initiated by Arzhantsev, Kuyumzhiyan and Zaidenberg \cite{MR3949984} who showed that this property holds for toric varieties that are smooth in codimension $2$. Alternatively, one can also aim to generate the whole Lie algebra of polynomial vector fields with finitely many complete vector fields whose flows will then -- by an application of Anders\'en--Lempert theory -- also generate a group that acts infinitely transitively. This approach was pursued by the author for $\CC^n$ \cite{finitelie}, and for $\mathrm{SL}_2(\CC)$ and the singular surface $x y = z^2$ \cite{finiteliebis}.

In this article we will consider these questions for the Danielewski surface $Z_p := \{ (x,y,z) \in \CC^3 \,:\, x y - p(z) = 0 \}$
where $p(z) \in \CC[z]$ is a polynomial with simple zeros.
Requiring that $p$ has only simple zeros is equivalent to the smoothness of $Z_p$. We will denote its degree by $d := \deg p$.

Throughout the paper we will work with the following, well-known vector fields on $Z_p$:
\[
\begin{alignedat}{4}
V   &:= p'(z) \frac{\partial}{\partial x} & &+ y \frac{\partial}{\partial z} \\
W   &:= & p'(z) \frac{\partial}{\partial y} &+ x \frac{\partial}{\partial z} \\
H   &:= - x \frac{\partial}{\partial x} &+ y \frac{\partial}{\partial y} &
\end{alignedat}
\]
Note that $[V, H] = -V$, $[W, H] = W$ and $[V, W] = p''(z) \cdot H$. 
The flows of $V, W$ and $H$, respectively, are given by
\begin{align*}
\varphi_t(x,y,z) &= (x + t p'(z) + \dots + t^d y^{d-1} p^{(d)}(z), y, z + y t) \\ 
\psi_t(x,y,z)    &= (x, y + t p'(z) + \dots + t^d x^{d-1} p^{(d)}(z), z + x t) \\ 
\chi_t(x,y,z)    &= (e^{-t} x, e^{t} y, z)
\end{align*}

The smooth Danielewski surface $Z_p$ is equipped with a holomorphic volume form 
\[
\omega := \frac{dx \wedge dz}{x} = \frac{dz \wedge dy}{y} = \frac{dx \wedge dy}{p'(z)}.
\]
It is straightforward to check that the vector fields $V, W, H$ preserve the volume form $\omega$, i.e.\ the Lie derivative of $\omega$ w.r.t.\ to each of them vanishes: $\mathcal{L}_{\Theta} \omega = 0$ for $\Theta = V, W, H$.

\begin{definition}[\cite{MR1721579}]
Let $X$ be a complex manifold and let $\Theta$ be a vector field on $X$. 
For a holomorphic function $f \colon X \to \CC$ we call the vector field
\begin{enumerate}
\item $f \cdot \Theta$ a \emph{shear} vector field of $\Theta$ if $\Theta(f) = 0$.
\item $f \cdot \Theta$ an \emph{overshear} vector field of $\Theta$ if $\Theta^2(f) = 0$.
\end{enumerate}
\end{definition}
The significance of this definition lies in the following facts \cite{MR1721579}: If the vector field $\Theta$ is complete (i.e.\ its flow map exists for all complex times), then the shear and overshear vector fields are complete as well. Moreover, if $X$ admits a volume form, and $\Theta$ is volume-preserving, then also its shear vector fields are volume-preserving. In the algebraic category, a shear $f \cdot \Theta$ of a locally nilpotent derivation $\Theta$ is called a \emph{replica} of $\Theta$, and is again a locally nilpotent derivation.

\begin{example}
The vector field $V$ is a locally nilpotent derivation and hence a complete and $\omega$-preserving vector field on the Danielewski surface $Z_p$. Since $f(y) \in \CC[y]$ is in the kernel of $Y$, the vector field $f(y) V$ is a shear of $V$ and, hence, complete and $\omega$-preserving as well. In fact, the flow map of $f(y) V$ is given by $\varphi_{f(y) t}$. On the other hand, we have $\Theta^2(z f(y)) = \Theta(y f(y)) = 0$, which means that $z f(y) V$ is an overshear of $V$ and again complete; however, it does no longer preserve $\omega$: $\mathcal{L}_{z f(y) V} \omega = y f(y) \cdot \omega$. 
\end{example}

The smooth Danielewski surfaces $Z_p$ enjoy the so-called algebraic density property and the algebraic volume density property, see Kaliman and Kutzschebauch \cite{MR2350038}. The algebraic density property implies the density property, and the algebraic volume density property implies the volume density property. These properties allow for a Runge approximation of (volume-preserving) holomorphic injections by (volume-preserving) holomorphic automorphisms, and for a description of the (volume-preserving) holomorphic automorphism group. See Section \ref{sec-survey} for more details. The Lie algebra generated by the overshears of $V$ and $W$ was studied in further detail by Kutzschebauch and Lind \cite{MR2823038}, and the Lie algebra generated by all the locally nilpotent derivations was determined by Kutzschebauch and Leuenberger \cite{MR3495426}.

The following are the three main results of this paper, each of which determines a finite set of generators for a certain Lie algebra of vector fields on a smooth Danielewski surface.

\begin{enumerate}
\item The Lie algebra of all polynomial vector fields on $Z_p$ is generated by (at most) six complete vector fields, namely \[V, W, H, zV, zW, zH\] (see Theorem \ref{thm-full}).
\item The Lie algebra of volume-preserving polynomial vector fields on $Z_p$ is generated by finitely many complete vector fields, namely \[y^n V, x^n W, z^m H\] for  $0 \leq n \leq \max\{1 , d-2\}$ and $0 \leq m \leq \max\{2, d-2\}$ (see Theorem \ref{thm-volume}).
\item However, an infinitely transitive action on $Z_p$ is generated by four unipotent groups, namely the flows of the locally nilpotent derivations \[V, W, yV, xW\] (see Theorem \ref{thm-unipotent}).
\item This result can be extended to higher dimensions (see Theorem \ref{thm-affinemod}).
\item We also consider the case of direct products in Section \ref{sec-products} in a general setting. This is used in the proof of the preceding result.
\end{enumerate}

The vector fields $V, W, yV, xW$ do in general (for $d \geq 4$) generate a Lie algebra that contains not even all of the powers $y^n V, x^n W$. However, it turns out that despite these ``gaps'', we can still interpolate functions in their respective kernels sufficiently well (see Lemma \ref{lem-interpolate}) to construct the automorphisms needed for infinite transitivity. 

\section{Brief Survey of the Density Property}
\label{sec-survey}

This brief survey basically follows the exposition of the author in \cite{finiteliebis}.

\begin{definition}
Let $X$ be a complex manifold and let $\Theta$ be a holomorphic vector field on $X$. We call $\Theta$ \emph{complete} or \emph{$\CC$-complete} if its flow map exists for all times $t \in \CC$. We call $\Theta$ \emph{$\RR$-complete} if its flow map exists for all times $t \in \RR$. 
\end{definition}
Since the flow satisfies the semi-group property, any time-$t$ map of a $\RR$- or $\CC$-complete vector field is a holomorphic automorphism.

\smallskip

The density property for complex manifolds was introduced and studied by Varolin in \cites{MR1785520,MR1829353} around 2000:

\begin{definition} [\cite{MR1829353}] \hfill
\begin{enumerate}
\item Let $X$ be a Stein manifold. We say that $X$ has the \emph{density property} if the Lie algebra generated by the complete holomorphic vector fields on $X$ is dense (in the compact-open topology) is the Lie algebra of all holomorphic vector fields on $X$.
\item Let $X$ be an affine manifold. We say that $X$ has the \emph{algebraic density property}, if the Lie algebra generated by the complete algebraic vector fields on $X$ coincides with the Lie algebra of all algebraic vector fields on $X$.
\end{enumerate}
\end{definition}

By a standard application of Cartan's Theorem B and Cartan--Serre's Theorem A, the algebraic density property implies the density property. The algebraic density property is merely a tool to prove the density property, since algebraic vector fields are easier to describe.

\begin{example}
For the purpose of this article, we only mention the following examples
\begin{enumerate}
\item $\CC^n, n \geq 2,$ enjoys the algebraic density property
\item $(\CC^n, dz_1 \wedge \dots \wedge dz_n)$ enjoys the algebraic volume density property
\item $((\CC^\ast)^n, dz_1/z_1 \wedge \dots \wedge dz_n/z_n)$ enjoys the algebraic volume density property
\item Danielewski surfaces $\{ (x,y,z) \in \CC^3 \,:\, xy = p(z) \}$ where $p$ is a polynomial with simple zeroes, enjoy the algebraic density property, and, with volume form $\omega = dx/x \wedge dz$, the algebraic volume density property \cite{MR2350038}.
\end{enumerate}
For details and a comprehensive list we refer the reader to the recent survey by Forstneri\v{c} and Kutzschebauch \cite{MR4440754}. These proofs require countable families of complete vector fields, and it is a priori not clear that the Lie algebras in question can be finitely generated.
\end{example}

Let $X$ be a complex manifold of complex dimension $n$. We call a complex differential form of bi-degree $(n,0)$ on $X$ a \emph{volume form} if it is nowhere degenerate.

Let $X$ be a complex manifold. We denote its \emph{group of holomorphic automorphisms} by $\aut(X)$. If $X$ is smooth and if there exists a volume form $\omega$ on $X$, we denote the \emph{group of $\omega$-preserving holomorphic automorphisms} by $\aut_{\omega}(X)$.

\begin{definition}\hfill
\begin{enumerate}
\item Let $X$ be a Stein manifold with a holomorphic volume form $\omega$. We say that $(X, \omega)$ has the \emph{volume density property} if the Lie algebra generated by the complete $\omega$-preserving holomorphic vector fields on $X$ is dense (in the compact-open topology) in the Lie algebra of all $\omega$-preserving holomorphic vector fields on $X$. \cite{MR1829353}
\item Let $X$ be an affine manifold with an algebraic volume form $\omega$. We say that $(X, \omega)$ has the \emph{algebraic volume density property} if the Lie algebra generated by the complete $\omega$-preserving algebraic vector fields on $X$ coincides with the Lie algebra of all $\omega$-preserving algebraic vector fields on $X$. \cite{MR2660454}
\end{enumerate}
\end{definition}

Again, the algebraic volume density property implies the volume density property; however, the proof is not straightforward and can be found in \cite{MR2660454} by Kaliman and Kutzschebauch.

The main result for manifolds with the density property is the following theorem which was first stated for star-shaped domains of $\CC^n$ by Anders\'en and Lempert in 1992, then generalized to Runge domains by Forstneri\v{c} and Rosay in 1993 and finally extended to manifolds with the density property by Varolin:
\begin{theorem}\cite{MR1829353}
\label{thmAL}
Let $X$ be a Stein manifold with the density property or $(X,\omega)$ be a Stein manifold with the volume density property, respectively. Let $\Omega \subseteq X$ be an open subset and $\varphi \colon [0,1] \times \Omega \to X$ be a $\cont^1$-smooth map such that
\begin{enumerate}
\item $\varphi_0 \colon \Omega \to X$ is the natural embedding,
\item $\varphi_t \colon \Omega \to X$ is holomorphic and injective for every $t \in [0,1]$ and, respectively, $\omega$-preserving, and
\item $\varphi_t(\Omega)$ is a Runge subset of $X$ for every $t \in [0,1]$.
\end{enumerate}
In the case of volume density property, further assume that the holomorphic $(n-1)$th de Rham cohomology of $\Omega$ vanishes. 
Then for every $\varepsilon > 0$ and for every compact $K \subset \Omega$ there exists a continuous family $\Phi \colon [0, 1] \to \aut(X)$ or (respectively) $\Phi \colon [0, 1] \to \aut_\omega(X)$, such that $\Phi_0 = \id_X$ and $\| \varphi_t - \Phi_t \|_K < \varepsilon$ for every $t \in [0,1]$.

\smallskip
Moreover, each of these automorphisms $\Phi_t$ can be chosen to be compositions of flows of generators of a dense Lie subalgebra in the Lie algebra of all holomorphic vector fields on $X$.
\end{theorem}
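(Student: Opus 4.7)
The plan is to follow the classical strategy of Anders\'en--Lempert, adapted to the abstract density-property setting, and to reduce the global statement to a local, infinitesimal one. First I would discretize the parameter interval: partition $[0,1]$ into $N$ steps $0 = t_0 < t_1 < \dots < t_N = 1$ so fine that each transition map $\psi_k := \varphi_{t_{k+1}} \circ \varphi_{t_k}^{-1}$ is arbitrarily close to the identity on a compact neighbourhood of $\varphi_{t_k}(K)$ inside $\varphi_{t_k}(\Omega)$. Since $\varphi$ is $\cont^1$ in $t$, this transition map is the time-$1$ image of a small, time-dependent holomorphic vector field $\xi_k$ defined on a neighbourhood of $\varphi_{t_k}(\Omega)$; to leading order $\xi_k$ equals $\Delta t \cdot (\partial_t \varphi_t)|_{t=t_k} \circ \varphi_{t_k}^{-1}$. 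The task thus reduces to approximating each $\psi_k$ by an automorphism close to the identity on the relevant compact, and composing the results.

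Next I would globalize $\xi_k$: since $\varphi_{t_k}(\Omega)$ is Runge in the Stein manifold $X$, an application of Cartan's Theorems A and B approximates $\xi_k$, uniformly on $\varphi_{t_k}(K)$, by a holomorphic vector field defined on all of $X$. Invoking the density property (respectively the volume density property), this global field can then be approximated in the compact-open topology by an element of the Lie algebra generated by complete ($\omega$-preserving) holomorphic vector fields on $X$. In the volume-preserving case $\xi_k$ is automatically $\omega$-preserving, and this is where the hypothesis $H^{n-1}_{\mathrm{dR}}(\Omega) = 0$ enters: the contracted form $\iota_{\xi_k}\omega$ is closed, the cohomological vanishing makes it exact on $\Omega$, and this exactness is what permits the Runge approximation to be carried through in the $\omega$-preserving category.

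Then I would convert Lie brackets into compositions of flows. The approximating vector field is, in general, a finite $\CC$-linear combination of iterated brackets of complete fields; for small $s$ the time-$s$ flow of a bracket $[\Theta_1,\Theta_2]$ is approximated on compacta by $\phi^{\Theta_1}_{\sqrt{s}} \circ \phi^{\Theta_2}_{\sqrt{s}} \circ \phi^{\Theta_1}_{-\sqrt{s}} \circ \phi^{\Theta_2}_{-\sqrt{s}}$, with iterated versions covering deeper brackets, and the Trotter product formula handles finite sums. Combining these, the time-$\Delta t$ flow of the approximating field can be approximated on $\varphi_{t_k}(K)$ by a finite composition of time-$t$ maps of complete vector fields. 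Each such composition is an element of $\aut(X)$ (respectively $\aut_\omega(X)$), and lies in the subgroup generated by flows of a dense Lie subalgebra, which yields the ``moreover'' clause. Composing the step approximations and reparametrizing in time then produces a continuous path $\Phi \colon [0,1] \to \aut(X)$ with $\Phi_0 = \id_X$ and $\|\varphi_t - \Phi_t\|_K < \varepsilon$.

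The main obstacle is controlling the compounding of errors: each reduction above --- the Cartan--Runge globalization, the density-property approximation, and the commutator/Trotter expansion --- introduces its own error, and one must verify that these errors remain small when composed across all $N$ subintervals while the intermediate images continue to lie in the domains of definition of the vector fields involved. This calls for an \emph{a priori} recursive choice of tolerances and of nested compact neighbourhoods at the outset, together with an induction on $k$. A secondary but nontrivial difficulty arises in the volume-preserving setting, where every approximation step must preserve $\omega$; here the cohomological hypothesis on $\Omega$ is essential, as without it the replacement of a generic holomorphic approximation by a divergence-free one would be genuinely obstructed.
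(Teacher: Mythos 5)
The paper does not prove Theorem \ref{thmAL}: it is quoted from Varolin \cite{MR1829353} (with the earlier cases due to Anders\'en--Lempert and Forstneri\v{c}--Rosay), so there is no in-paper argument to compare against. Your outline reproduces the standard proof from the literature and is essentially correct: time discretization into near-identity transition maps, globalization of the infinitesimal generator $\xi_k$ using the Runge hypothesis together with Cartan's Theorems A and B, approximation by the Lie algebra generated by complete (respectively $\omega$-preserving) vector fields via the (volume) density property, and conversion of sums and iterated brackets into compositions of flows; you also correctly locate the role of the hypothesis that the holomorphic $(n-1)$th de Rham cohomology of $\Omega$ vanishes, namely that $\iota_{\xi_k}\omega$ is closed and must be made exact so that the divergence-free approximation can be carried out globally. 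The one step stated too loosely is the commutator approximation: a single group commutator with flow times $\sqrt{s}$ reproduces the time-$s$ flow of $[\Theta_1,\Theta_2]$ only up to an error of order $s^{3/2}$, so one must subdivide and iterate to drive the error on a fixed compact to zero; this iterated statement is exactly Proposition \ref{prop-lieapprox} (Forstneri\v{c}, Corollary 4.8.4), which the paper records separately and which you should invoke rather than re-derive. With that substitution, and the recursive choice of tolerances and nested compacts that you already identify as the main technical burden, your sketch is a faithful account of the cited proof.
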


One of the two main ingredients in the proof of Theorem \ref{thmAL} is the following proposition which has been found by Varolin \cite{MR1829353}, but is stated best as a stand-alone result in the monograph of Forstneri\v{c} \cite{MR3700709}.

\begin{proposition}[Forstneri\v{c} \cite{MR3700709}*{Corollary 4.8.4}]
\label{prop-lieapprox}
Let $V_1, \dots, V_m$ be $\mathbb{R}$-complete holomorphic vector fields on a complex manifold $X$. Denote by $\mathfrak{g}$ the Lie subalgebra generated by the vector fields $\{V_1, \dots, V_m\}$ and let $V \in \mathfrak{g}$. Assume that $K$ is a compact set in $X$ and $t_0 > 0$ is such that the flow $\varphi_t(x)$ of $V$ exists for every $x \in K$ and for all $t \in [0, t_0]$. Then $\varphi_{t_0}$ is a uniform limit on $K$ of a sequence of compositions of time-forward maps of the vector fields $V_1, \dots, V_m$.
\end{proposition}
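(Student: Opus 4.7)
The plan is to define $\mathfrak{h}$ as the set of holomorphic vector fields $V$ on $X$ for which, whenever the flow of $V$ exists on $[0,t_0] \times K$ for some $t_0 > 0$ and compact $K \subset X$, the map $\varphi^V_{t_0}$ is a uniform limit on $K$ of finite compositions of time-forward maps of $V_1,\dots,V_m$. I would show that $\mathfrak{h}$ is a Lie subalgebra of the holomorphic vector fields containing $V_1,\dots,V_m$; since $\mathfrak{g}$ is by definition the smallest such Lie subalgebra, this gives $\mathfrak{g} \subseteq \mathfrak{h}$, which is the desired conclusion.

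The closure of $\mathfrak{h}$ under the Lie-algebra operations rests on two classical identities, both established by Taylor expansion in a local chart. The Lie--Trotter product formula
\[
\lim_{n \to \infty} \left( \varphi^V_{t/n} \circ \varphi^W_{t/n} \right)^n = \varphi^{V+W}_t
\]
yields closure under addition, and the commutator formula
\[
\lim_{n \to \infty} \left( \varphi^V_{s_n} \circ \varphi^W_{s_n} \circ \varphi^V_{-s_n} \circ \varphi^W_{-s_n} \right)^n = \varphi^{[V,W]}_t, \qquad s_n = \sqrt{t/n},
\]
yields closure under brackets. Closure under real scalar multiplication is immediate from $\varphi^{\lambda V}_t = \varphi^V_{\lambda t}$; closure under multiplication by $i$, and hence by every complex scalar, uses the fact that a holomorphic $\mathbb{R}$-complete vector field admits a complex-time flow in a tubular neighborhood of $\mathbb{R} \subset \mathbb{C}$, so that $\varphi^{iV}_t = \varphi^V_{it}$ wherever the right-hand side is defined. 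To pass approximations through compositions one invokes the joint continuity of composition in the compact-open topology for maps that keep a fixed compact set inside a fixed open set.

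A minor issue is that the commutator identity uses \emph{backward-time} flows of $V$ and $W$, seemingly in tension with the ``time-forward'' phrasing of the statement; this is resolved by observing that $-V_i \in \mathfrak{g}$ for each $i$, so the generating set may be harmlessly enlarged to $\{\pm V_1,\dots,\pm V_m\}$, and a backward flow of $V_i$ is then a forward flow of $-V_i$. The genuine technical hurdle, and the step I would treat most carefully, is the uniform-in-compacta control of the two limits above as $n \to \infty$: each of the $4n$ intermediate trajectories must remain in a fixed compact set on which both the flow and the local Taylor estimates are defined. I would handle this by first enlarging $K$ to a slightly larger compact $K'$ on which the flow of $V$ still exists for times in $[0,t_0]$ (possible by continuity in initial data together with the compactness of $[0,t_0]\times K$), then verifying that the cumulative error after $n$ iterations scales as $n \cdot O(s_n^3) = O(n^{-1/2}) \to 0$ for the commutator identity, and analogously $O(n^{-1})$ for the Trotter formula.
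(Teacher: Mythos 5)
The paper does not actually prove this proposition: it is imported verbatim from Forstneri\v{c}'s monograph (Corollary 4.8.4 there) and used as a black box, so there is no internal argument to compare yours against. Your overall strategy --- let $\mathfrak{h}$ be the set of vector fields with approximable forward flows, and show it is a Lie algebra containing $V_1,\dots,V_m$ via the Lie--Trotter formula, the commutator formula, and the $n\cdot O(s_n^3)=O(n^{-1/2})$ error bookkeeping --- is precisely the standard proof of the cited result, and the parts concerning sums, brackets, and positive real scalars are sound in outline.

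The genuine gap is in the closure of $\mathfrak{h}$ under scalars. The identity $\varphi^{iV}_t=\varphi^V_{it}$ does not show that $iV\in\mathfrak{h}$: membership of $V$ in $\mathfrak{h}$ only gives approximability of the \emph{real} forward-time maps of $V$ by compositions of generator flows, and says nothing about the \emph{complex}-time map $\varphi^V_{it_0}$, which is what you would have to approximate. This step cannot be repaired in the stated generality: for $X=\CC$ and $V_1=\partial/\partial z$, every composition of time-forward (indeed, of arbitrary real-time) maps of $V_1$ is a real translation $z\mapsto z+s$, and these stay at distance at least $1$ on $K=\{0\}$ from the time-$1$ map $z\mapsto z+i$ of $iV_1$, which lies in the complex Lie algebra generated by $V_1$. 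Relatedly, enlarging the generating set to $\{\pm V_1,\dots,\pm V_m\}$ is not ``harmless'': a time-forward map of $-V_j$ is a time-backward map of $V_j$, so after this enlargement you are no longer proving the literal conclusion. What your argument actually establishes is the version for the \emph{real} Lie algebra generated by $V_1,\dots,V_m$, with approximating compositions of time-$t$ maps for arbitrary $t\in\RR$; that weaker, correct version is all the paper ever needs, since in the application (Theorem \ref{thm-unipotent}) the generators are $\CC$-complete LNDs whose flows at every complex time already belong to the group $G$. You should either restrict your claim accordingly or flag explicitly that the passage from real-time approximability to complex scalars and strictly forward times requires an additional hypothesis and is not available in general.
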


For the proof of Theorem \ref{thm-unipotent}, where we can't make use of the density property, we will need to use Proposition \ref{prop-lieapprox} directly.

\bigskip

As one of many standard applications of Theorem \ref{thmAL} we obtain the following. It is implicit in the paper of Varolin \cite{MR1785520}, but can also be found with a detailed proof in \cite{finitelie}*{Lemma 7 and Corollary 8}.

\begin{proposition}
\label{propinftrans}
Let $X$ be a Stein manifold with the density property resp.\ $(X,\omega)$ be a Stein manifold with the volume density property with $\dim_\CC X \geq 2$. Let $\mathfrak{g}$ be a Lie algebra that is dense in the Lie algebra of all holomorphic vector fields on $X$ resp.\ in the Lie algebra of all $\omega$-preserving holomorphic vector fields on $X$. Then the group of holomorphic automorphisms generated by the flows of completely integrable generators of $\mathfrak{g}$ acts infinitely transitively on $X$.
\end{proposition}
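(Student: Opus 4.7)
The plan is to invoke Anders\'en--Lempert theory (Theorem \ref{thmAL}), supplemented by Proposition \ref{prop-lieapprox}, in the manner of \cite{finitelie}*{Lemma 7 and Corollary 8}. First I would fix $m$ distinct source points $p_1,\dots,p_m \in X$ and distinct targets $q_1,\dots,q_m \in X$. Since $\dim_\CC X \geq 2$, the configuration space of ordered $m$-tuples of distinct points in $X$ is connected (a real-codimension $\geq 4$ subset cannot locally disconnect a manifold), so I can choose pairwise disjoint smooth paths $\gamma_i \colon [0,1] \to X$ with $\gamma_i(0) = p_i$ and $\gamma_i(1) = q_i$.

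Next I would realize these paths by a holomorphic isotopy to which Theorem \ref{thmAL} applies. Pick small pairwise disjoint biholomorphic balls $B_i$ around the $p_i$ so that $\Omega := \bigsqcup_i B_i$ is Runge in $X$ (possible since $X$ is Stein and the point set is finite), and construct a smooth family $\varphi_t \colon \Omega \to X$ of holomorphic injections translating each $B_i$ to a small ball around $\gamma_i(t)$, with $\varphi_t(\Omega)$ Runge for all $t$ and $\varphi_t(p_i) = \gamma_i(t)$. In the volume-density case, each $\varphi_t|_{B_i}$ is arranged to preserve $\omega$ by a Moser-type construction on the contractible ball $B_i$, and the cohomological hypothesis $H^{n-1}(\Omega, \mathcal{O}) = 0$ of Theorem \ref{thmAL} holds since $\Omega$ is a disjoint union of Stein balls. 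Applying Theorem \ref{thmAL} with $K = \{p_1,\dots,p_m\}$, taking its dense Lie subalgebra to be $\mathfrak{g}$ itself, produces for any $\varepsilon > 0$ an automorphism $\Phi$ of $X$ (resp.\ $\omega$-preserving) with $\|\Phi(p_i) - q_i\| < \varepsilon$, expressible as a composition of flows of generators of $\mathfrak{g}$; Proposition \ref{prop-lieapprox} then lets me approximate any such flow by compositions of forward-time flows of the completely integrable generators of $\mathfrak{g}$ on the compact set $K$. Combining, I obtain an element $F_\varepsilon$ of the group $G$ in question with $\|F_\varepsilon(p_i) - q_i\| < 2\varepsilon$.

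The remaining step, and the main obstacle, is to promote this $\varepsilon$-approximate transitivity to exact interpolation $F(p_i) = q_i$. The standard technique is an inductive correction scheme: having produced $F_n \in G$ with $F_n(p_i)$ within $\varepsilon_n$ of $q_i$, reapply the construction with the perturbed source tuple $(F_n(p_i))_i$, the same targets, a much smaller tolerance $\varepsilon_{n+1}$, and an isotopy supported in a neighborhood shrinking fast enough that the composed sequence of corrections stabilizes on every compact subset of $X$. The delicate convergence analysis (carried out in \cite{finitelie}*{Lemma 7 and Corollary 8}) then shows that the limit lies in $G$ itself rather than merely in its closure in $\aut(X)$; alternatively one may appeal to the jet-interpolation strengthening of Theorem \ref{thmAL} to match values at the finite set $\{p_i\}$ exactly in a single application. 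Either way, once exact interpolation is available for one arbitrary choice of source and target $m$-tuples, the proposition follows.
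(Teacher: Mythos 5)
The paper itself gives no proof of Proposition \ref{propinftrans}; it defers to \cite{finitelie}*{Lemma 7 and Corollary 8}. Your first two paragraphs reproduce the standard reduction correctly: connect the source tuple to the target tuple by disjoint paths (using $\dim_\CC X \geq 2$ to keep the configuration space connected), realize the motion by a Runge isotopy of small disjoint balls, apply Theorem \ref{thmAL} with $\mathfrak{g}$ as the dense subalgebra, and then use Proposition \ref{prop-lieapprox} to replace flows of elements of $\mathfrak{g}$ by words in the flows of the complete generators. Up to that point you have, for every $\varepsilon>0$, an element of the group $G$ moving each $p_i$ to within $\varepsilon$ of $q_i$; this matches the cited argument.

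The gap is in your final step, and it is exactly the step that matters: the proposition asserts that $G$ itself --- the group of \emph{finite} compositions of time maps of the complete generators --- acts infinitely transitively, not its closure in $\aut(X)$. Your inductive correction scheme produces an infinite composition of elements of $G$; even if it converges uniformly on compacts, the limit lies only in $\overline{G}$, and no convergence analysis can place an infinite word back inside $G$. Your alternative, the jet-interpolation strengthening of Theorem \ref{thmAL}, has the same defect: the correcting automorphism that matches values exactly is not supplied as a composition of flows of generators of $\mathfrak{g}$, so the resulting automorphism again need not lie in $G$. The mechanism that actually closes the argument --- both in \cite{finitelie} and in this paper's own proof of Theorem \ref{thm-unipotent} --- is finite-dimensional: one builds a holomorphic family, over finitely many complex time parameters, of compositions of flows of elements of $\mathfrak{g}$ whose evaluation map at $(p_1,\dots,p_m)$ is a submersion onto a neighborhood of the target tuple; one approximates this family uniformly in the parameters by elements of $G$ (Proposition \ref{prop-lieapprox} gives uniformity in the times, and Cauchy estimates preserve submersivity), and then openness of the perturbed evaluation map yields a choice of times hitting $(q_1,\dots,q_m)$ exactly. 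Equivalently, the $G$-orbits on the configuration space of distinct $m$-tuples are open, hence also closed, hence equal to the whole connected configuration space. Without this finite-parameter argument your proof establishes only that $\overline{G}$ acts infinitely transitively.
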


\section{Lie algebra of polynomial vector fields}

\begin{lemma}
\label{lem-stepup}
The complete vector fields $V, W, z W, z V$ generate a Lie algebra containing $y^n V, x^n W$ for $n \geq 0$.
\end{lemma}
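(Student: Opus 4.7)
The plan is to observe that $V$ and $W$ are both locally nilpotent derivations whose kernels are easy to compute, and then use the standard shear bracket identity to perform a one-line induction. Specifically, since $V = p'(z)\partial_x + y\partial_z$ satisfies $V(y) = 0$, every polynomial $f(y)$ lies in the kernel of $V$, so $y^n V$ is a shear of $V$ for all $n \geq 0$ (and analogously $x^n W$ is a shear of $W$). Thus the statement is not about completeness but about membership in the Lie algebra generated by the four given fields.

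The key calculation is the shear bracket identity: for any two functions $f, g$ and a vector field $\Theta$,
\[
[f\Theta, g\Theta] = \bigl(f\,\Theta(g) - g\,\Theta(f)\bigr)\Theta.
\]
Applying this with $\Theta = V$, $f = z$, $g = y^n$, and using $V(z) = y$ and $V(y^n) = 0$, I get
\[
[zV, y^n V] = -y^{n+1} V.
\]
Starting from the base case $y^0 V = V$ (which is given), induction on $n$ yields $y^n V$ in the Lie algebra for every $n \geq 0$. The identical computation with $\Theta = W$, using $W(z) = x$ and $W(x^n) = 0$, gives $[zW, x^n W] = -x^{n+1} W$ and hence $x^n W$ for every $n \geq 0$.

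There is essentially no obstacle here; the lemma is a direct consequence of the shear bracket identity and the fact that both $V$ and $W$ send $z$ to one of the coordinate functions ($y$ and $x$ respectively) whose powers lie in their kernel. The only thing worth emphasizing before the proof is that $y^n V$ and $x^n W$ are indeed complete vector fields (being shears of complete vector fields), which justifies the terminology ``complete generators'' in the statement, although the lemma itself only asserts Lie-algebraic containment.
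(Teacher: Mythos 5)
Your proof is correct and is essentially the same as the paper's, which also proceeds by induction using the bracket $[y^n V, zV] = y^{n+1}V$ (your version differs only by the order of the arguments, hence a sign). The additional remarks about shears and completeness are accurate but not needed for the containment claim.
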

\begin{proof}
By induction we obtain $[x^n W, z W] = x^{n+1} W$ and $[y^n V, z V] = y^{n+1} W$. 
\end{proof}

\begin{theorem}
\label{thm-full}
The complete vector fields $V, W, H, z W, z V, z H$ generate the Lie algebra of all polynomial vector fields on the smooth Danielewski surface $Z_p$. 
\end{theorem}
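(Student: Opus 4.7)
The plan is to show that the Lie algebra $\mathcal{L}$ generated by $V, W, H, zV, zW, zH$ contains $fV$, $fW$, and $fH$ for every $f$ in a $\CC$-basis of the coordinate ring $\CC[Z_p]$. This is enough because, $Z_p$ being a smooth hypersurface, its module of polynomial tangent derivations is generated over $\CC[Z_p]$ by the three cofactor vector fields $V, W, H$, subject only to the Koszul-type syzygy $yW - xV = p'(z) \cdot H$ (a direct consequence of tangency to $\{xy = p(z)\}$). A convenient basis of $\CC[Z_p]$ is $\{z^k,\, x^i z^k,\, y^j z^k : i,j \geq 1,\ k \geq 0\}$.

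Step 1 (coefficients without $z$). By Lemma \ref{lem-stepup}, $y^n V$ and $x^n W$ are already in $\mathcal{L}$ for all $n \geq 0$. The brackets $[V, zH] = yH - zV$ and $[W, zH] = xH + zW$ extract $yH$ and $xH$. Then $[V, xH] = p'(z) H - xV$ and $[W, yH] = p'(z) H + yW$, combined with the syzygy, isolate the three elements $p'(z) H,\ xV,\ yW \in \mathcal{L}$. An induction using $[yH, p'(z) H] = -y p'(z) H$, the analogous identity for $xH$, and brackets such as $[yH, y^n W] = n y^n W - y^n p'(z) H - y^{n+1} W$ then produces all $y^n W, x^n V, y^n p'(z) H, x^n p'(z) H$. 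The powers $y^m H, x^m H$ come from brackets like $[yV, zH] = y^2 H - yV - y z V$, after noting that $y z V = [yH, zV]$ is in $\mathcal{L}$; iterating gives $y^m H, x^m H$ for all $m$.

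Step 2 (raising the $z$-power). Inducting on $k$, I would compute brackets of the form $[zV, z^{k-1} W]$, $[zW, z^{k-1} V]$, $[zH, z^{k-1} V]$, $[zV, z^{k-1} V]$, and $[zW, z^{k-1} W]$. Using $[V, W] = p''(z) H$, each such bracket produces an expression of the form ``$z^k \cdot (\text{cofactor combination}) + (\text{terms in the inductive hypothesis})$''; the new part is proportional to $z^k p^{(\ell)}(z) \Theta$ for some $\ell$ and $\Theta \in \{V, W, H\}$. The key algebraic fact is that $\{p'(z), p''(z), \ldots, p^{(d)}(z)\}$ forms a $\CC$-basis of $\CC[z]_{\leq d-1}$ (their degrees being $d-1, d-2, \ldots, 0$). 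Taking suitable $\CC$-linear combinations of the brackets therefore solves for pure $z^k V, z^k W, z^k H$ with $0 \leq k \leq d-1$; further brackets with $zH$ then push the $z$-degree arbitrarily high. The mixed monomials $x^i z^k V$, $y^j z^k W$, etc.\ are obtained by bracketing these pure $z$-power elements against the $xH, yH$ and other Step 1 elements, using the same pattern of commutators as in Step 1.

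The main obstacle I expect is the bookkeeping in Step 2: the derivatives $p^{(\ell)}(z)$ enter every bracket entangled with powers of $z$ and of $x, y$, so isolating any individual monomial of $\mathcal{L}$ requires a carefully chosen linear combination of several brackets, and the inductive order matters. The basis property of $\{p^{(\ell)}\}_{\ell=1}^d$ is what renders the relevant linear systems invertible and so makes the whole scheme go through; organizing the proof around this fact and writing out the necessary brackets explicitly (rather than by a grand structural argument) is what I would expect the written proof to look like.
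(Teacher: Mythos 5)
Your overall strategy is legitimate and genuinely different from the paper's: the paper only shows that the six fields generate the shears and overshears $y^n V$, $x^n W$, $z y^n V$, $z x^n W$ ($n \geq 0$) and then invokes the theorem of Kutzschebauch and Lind that these already generate the full Lie algebra of polynomial vector fields on $Z_p$; you instead propose to prove generation from scratch via the $\CC[Z_p]$-module structure of the tangent sheaf. Your structural inputs are correct (the cofactor fields $V, W, H$ do generate the module of derivations of $\CC[Z_p]$ with the single Koszul syzygy $yW - xV = p'(z)H$, and $\{z^k, x^i z^k, y^j z^k\}$ is a basis of the coordinate ring), and Step 1 is essentially workable, although several displayed identities are wrong as written: for instance $[yH, y^n W] = (n-1) y^{n+1} W - y^n p'(z) H$, not $n y^n W - y^n p'(z) H - y^{n+1} W$, so extracting $y^{n+1} W$ already presupposes $y^n p'(z) H \in \mathcal{L}$, which you have not produced at that stage; similarly $[yV, zH] = y^2 H - 2 y z V$, not $y^2 H - yV - yzV$.

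The genuine gap is Step 2. What you defer as ``bookkeeping'' is exactly the hard part of the theorem: the brackets $[z\Theta, z^{k-1}\Xi]$ do not yield clean terms $z^k p^{(\ell)}(z)\,\Theta$ plus quantities covered by the inductive hypothesis --- for example $[zV, z^{k-1}W] = z^k p''(z) H + (k-1) y z^{k-1} W - x z^{k-1} V$, and the mixed terms $y z^{k-1} W$, $x z^{k-1} V$ belong to a different stratum of the basis whose membership in $\mathcal{L}$ is itself part of what is being proved, so the induction is not obviously well-founded. Moreover, the claim that the available brackets produce enough linearly independent combinations of the $z^k p^{(\ell)}(z)\, \Theta$ to solve for each pure monomial is asserted, not verified; the basis property of $\{p', \dots, p^{(d)}\}$ alone does not guarantee that the specific elements you can reach realize all the needed coefficients. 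This verification is essentially the content of the Kutzschebauch--Lind result \cite{MR2823038}, a substantial computation in its own right, and without either carrying it out or citing that theorem the proof is incomplete. The quickest repair is the paper's route: produce $y^n V$, $x^n W$ (Lemma \ref{lem-stepup}) and $z y^n V$, $z x^n W$ (via $-yH = [zH, V] - [H, zV]$ and $[yH, z y^n V] = (n+1) z y^{n+1} V$), and then quote \cite{MR2823038}.
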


\begin{proof}
We first obtain
\[
-y H = [zH, V] - [H, zV]
.\] 
Then we proceed by induction on $n \geq 0$ with
\begin{align*}
[y H, z y^n V] &= y H(z y^n) V - z y^n V(y) H + z y^{n+1} [H, V] \\
 &= (n + 1) z y^{n+1} V
\end{align*}
Similarly, we obtain the terms $z x^{n+1} W$. 
Together with Lemma \ref{lem-stepup} we therefore obtain the following shear and overshear vector fields:
\[
y^n V, x^n W, z y^n V, z x^n W, \quad n \geq 0
\]
It was shown by Kutzschebauch and Lind \cite{MR2823038} that these vector fields together generate the Lie algebra of all polynomial vector fields on a smooth Danielewski surface $Z_p$.
\end{proof}

\begin{remark}
Note that in case of $d = 1$, the Danielewski surface $Z_p$ is just a graph and thus algebraically isomorphic to $\CC^2$ where we know by a previous result of the author \cite{finitelie} that only three complete generators are needed, however of less symmetric shape. 
\end{remark}

\begin{corollary}
The group generated by the one-parameter groups corresponding to the vector fields $V, W, H, z W, z V, z H$ is dense in the identity component of the group of holomorphic automorphisms of $Z_p$. In particular, it acts infinitely transitively on $Z_p$.
\end{corollary}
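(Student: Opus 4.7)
The plan is to combine the generation statement of Theorem \ref{thm-full} with the Andersén--Lempert machinery surveyed in Section \ref{sec-survey}. Since $Z_p$ is an affine variety and hence Stein, polynomial vector fields are dense in all holomorphic vector fields on $Z_p$ in the compact-open topology (standard application of Cartan's Theorem A). Combining this with Theorem \ref{thm-full}, the Lie algebra generated by the six complete vector fields $V, W, H, zV, zW, zH$ is dense in the Lie algebra of all holomorphic vector fields on $Z_p$.

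For the infinite transitivity assertion, I would simply invoke Proposition \ref{propinftrans}: the manifold $Z_p$ enjoys the density property by Kaliman--Kutzschebauch, its complex dimension equals $2$, and the dense Lie subalgebra above is generated by \emph{complete} vector fields. The proposition then immediately yields that the group generated by the flows of the six vector fields acts infinitely transitively on $Z_p$.

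For density in the identity component of $\aut(Z_p)$, I would argue as follows. Let $\Phi \in \aut(Z_p)$ be connected to $\id_{Z_p}$ by a continuous path $\Phi_t$ of automorphisms. Apply Theorem \ref{thmAL} to this path with $\Omega = Z_p$ (vacuously Runge in itself and with all injectivity and Runge hypotheses trivially satisfied). This produces, for every compact $K \subset Z_p$ and every $\varepsilon > 0$, an approximation of $\Phi$ on $K$ by a composition of flows of elements of the dense Lie subalgebra above. Since every such element lies in the Lie algebra generated by the six $\CC$-complete vector fields $V, W, H, zV, zW, zH$, Proposition \ref{prop-lieapprox} allows each of these flows to be further uniformly approximated on compacts by finite compositions of time-forward flows of the six generators; completeness of the generators for all complex times also makes backward time parameters available without restriction.

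The only point that needs any care is the chaining of the two approximations: the Andersén--Lempert approximation from Theorem \ref{thmAL} produces a finite composition of flows whose generators lie in the dense Lie subalgebra, and each of these is then further approximated via Proposition \ref{prop-lieapprox}. By choosing each local approximation sufficiently accurate and controlling the error propagation across the finite composition (a standard continuity argument), the resulting composition of flows of $V, W, H, zV, zW, zH$ approximates $\Phi$ uniformly on $K$ within $\varepsilon$. This is purely technical and involves no new ideas beyond those of Section \ref{sec-survey}.
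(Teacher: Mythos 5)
Your proposal is correct and follows essentially the same route as the paper: the paper's proof is a one-line citation of Theorem \ref{thm-full}, Theorem \ref{thmAL} and Proposition \ref{propinftrans}, and your write-up simply fills in the standard details (density of polynomial fields in holomorphic ones, the path argument for the identity component, and the chaining of approximations via Proposition \ref{prop-lieapprox}) that those citations encapsulate.
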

\begin{proof}
Since $V, W, H, z W, z V, z H$ are complete and generate the Lie algebra of all polynomial vector fields, the result follows Theorem \ref{thmAL} and Proposition \ref{propinftrans}.
\end{proof}

\section{Infinite Transitivity}

We need several computational lemmas as a preparation. 

\label{sec-LND}

For the iterated action of $\adj_V = [V, \cdot]$ we obtain the following formula:
\begin{lemma}
\label{lem1}
\begin{align}
\adj_V^n(W)        &=  -(n-1) y^{n-2} p^{(n)} \cdot V + y^{n-1} p^{(n+1)}(z) \cdot H
\end{align}
\end{lemma}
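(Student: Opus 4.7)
The natural approach is induction on $n$, using the two structural relations $[V,H] = -V$ and $[V,V] = 0$ together with the explicit formula for $V$ as a derivation. The key observation that makes the calculation clean is that, since $V = p'(z)\partial_x + y\partial_z$ has no $\partial_y$-component, its action on any function of the form $y^k h(z)$ is particularly simple: $V(y^k h(z)) = y \cdot y^k h'(z) = y^{k+1} h'(z)$.

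For the base case $n=1$, the formula reduces to $\adj_V(W) = p''(z) H$, which is precisely the bracket relation $[V,W] = p''(z) H$ stated in the introduction (with the leading coefficient $-(n-1) = 0$ killing the $V$-term, and $y^{n-1} = y^0 = 1$).

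For the inductive step, I would write
\[
\adj_V^{n+1}(W) = -(n-1)\bigl[V,\, y^{n-2} p^{(n)}(z)\, V\bigr] + \bigl[V,\, y^{n-1} p^{(n+1)}(z)\, H\bigr]
\]
and expand each bracket using the Leibniz identity $[V, f\Theta] = V(f)\Theta + f[V,\Theta]$. Because $[V,V]=0$, the first term contributes only $V(y^{n-2}p^{(n)}) \cdot V = y^{n-1} p^{(n+1)} V$. Because $[V,H] = -V$, the second term contributes $V(y^{n-1}p^{(n+1)}) H + y^{n-1} p^{(n+1)} \cdot (-V) = y^n p^{(n+2)} H - y^{n-1} p^{(n+1)} V$. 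Collecting the two $V$-coefficients gives $-(n-1) - 1 = -n$, which produces exactly the claimed formula at index $n+1$.

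There is no real obstacle here, only a bookkeeping check: one must be careful that the extra $-V$ contribution from $[V,H] = -V$ combines with the $V(y^{n-2}p^{(n)})$ contribution with the correct sign to turn the coefficient $-(n-1)$ into $-n$. If I wanted the formula to look slightly more uniform, I could package the whole argument by setting $A_n := \adj_V^n(W)$ and verifying $A_{n+1} = [V, A_n]$ matches the stated expression; but the direct induction above is the most transparent route.
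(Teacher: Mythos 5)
Your proposal is correct and follows essentially the same route as the paper: induction on $n$, with the base case $n=1$ given by $[V,W]=p''(z)H$ and the inductive step expanding $[V,\cdot]$ via the Leibniz rule, using $[V,V]=0$, $[V,H]=-V$, and $V(y^k h(z))=y^{k+1}h'(z)$ so that the two $V$-contributions combine as $-(n-1)-1=-n$. No gaps.
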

\begin{proof}
We have that $\adj_V(W) = [V, W] = p''(z) \cdot H$ which proves the case $n=1$. We now proceed by induction:
\begin{align*}
\adj_V^{n+1}(W) &=  [V,-(n-1) y^{n-2} p^{(n)} \cdot V + y^{n-1} p^{(n+1)}(z) \cdot H] \\
 &= V(-(n-1) y^{n-2} p^{(n)}) \cdot V \\
 &\quad- y^{n-1} p^{(n+1)}(z) \cdot V + V(y^{n-1} p^{(n+1)}(z)) \cdot H \\
 &= -n y^{n-1} p^{(n+1)} \cdot V + y^{n} p^{(n+2)}(z) \cdot H \qedhere
 \end{align*}
\end{proof}

By Lemma \ref{lem1} we obtain the term $y^{d-2} V$ if we choose $n = d-1$ where $d = \deg p$. 

\begin{remark}
We will sometimes need the following identity of vector fields on the Danielewski surface:
\begin{equation}
-x V + y W = p'(z) H
\end{equation}
\end{remark}

\begin{lemma}
\label{lem3}
\begin{align*}
\adj^n_{yV}(W) &= -n \cdot y^{2n-2} \cdot p^{(n)} \cdot V + y^{2n-1} \cdot p^{(n+1)} \cdot H \\
\adj^m_{V}(\adj^n_{yV}(W)) &= -(n+m) \cdot y^{2n-2+m} \cdot p^{(n+m)} \cdot V \\ &\quad + y^{2n-1+m} \cdot p^{(n+1+m)} \cdot H 
\end{align*}
\end{lemma}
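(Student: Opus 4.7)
The plan is to prove both identities by a two-stage induction sharing a single computational template: the Leibniz rule $[X, fY] = X(f)\,Y + f\,[X, Y]$ together with the structural brackets $[V,W]=p''(z)H$, $[V,H]=-V$, and the derivations $V(y)=0$, $V(z)=y$. Since the second identity reduces to the first when $m=0$, I would establish the first identity first and then peel off further applications of $\adj_V$ for the second.

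For the first identity I would induct on $n\geq 1$. The base case $n=1$ is immediate from the Leibniz rule: $[yV, W] = y[V,W] - W(y)\,V = y\,p''(z)\,H - p'(z)\,V$, matching the claim. For the induction step I would apply $\adj_{yV}$ term by term to the expression at level $n$, relying on two auxiliary brackets: $[yV, V] = 0$ (from $[V,V]=0$ and $V(y)=0$) and $[yV, H] = y[V,H] - H(y)V = -2yV$. The action of $yV$ on a coefficient of the form $y^a p^{(k)}(z)$ collapses to $y^{a+1}\,p^{(k+1)}(z)$ because $V(y)=0$ and $V(p^{(k)}) = y\,p^{(k+1)}$. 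Assembling the pieces produces a new expression of exactly the same shape, with the $y$-exponents and the derivative orders of $p$ both advancing by one.

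For the second identity I would fix $n$ and induct on $m\geq 0$, using the first identity as the base case $m=0$. The induction step is structurally identical: apply $\adj_V$ term by term, this time using $[V,V]=0$ and $[V,H]=-V$, and invoke $V(y^a p^{(k)}) = y^{a+1}\,p^{(k+1)}$ once more to preserve the form of the expression. The main obstacle I anticipate is bookkeeping the numerical coefficient of the $V$ term. At each inductive step two contributions to the $V$ term arise from the $H$-term of the previous expression --- one from differentiating its scalar coefficient and one from the bracket $[yV,H]$ (resp.\ $[V,H]$) --- and these must be summed with the $V$ coefficient carried over from the preceding step. Tracking this integer factor carefully across the induction, and checking the edge case where $2n-2$ or $2n-3$ becomes negative as a matter of convention, is where the temptation for sign or multiplicity errors is greatest.
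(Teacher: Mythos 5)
Your plan is structurally the same as the paper's proof: induct on $n$ for the first identity, then on $m$ for the second, expanding each bracket with the Leibniz rule and the relations $[V,W]=p''H$, $[V,H]=-V$, $[yV,V]=0$. The problem is that your own (correct) auxiliary bracket $[yV,H]=y[V,H]-H(y)V=-2yV$ is incompatible with the formula you are asked to prove. Writing $\adj^n_{yV}(W)=a_n\,y^{2n-2}p^{(n)}V+y^{2n-1}p^{(n+1)}H$, the $V$-component produced by the next application of $\adj_{yV}$ is
\[
a_n\,y^{2n}p^{(n+1)}\,V+y^{2n-1}p^{(n+1)}\cdot[yV,H]=\left(a_n-2\right)y^{2n}p^{(n+1)}\,V,
\]
so $a_{n+1}=a_n-2$ with $a_1=-1$, i.e.\ $a_n=-(2n-1)$, not $-n$. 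Concretely, $\adj^2_{yV}(W)=-3\,y^2p''\,V+y^3p'''\,H$, whereas the lemma asserts the coefficient $-2$. (A small slip in your narrative: differentiating the scalar coefficient of the $H$-term feeds the new $H$-term, not the $V$-term; the entire $V$-contribution of $gH$ comes from $g\,[yV,H]=-2gy\,V$.) So ``assembling the pieces'' does not reproduce the stated expression for $n\geq 2$; carried out faithfully, your computation refutes the displayed coefficient rather than confirming it.

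The source of the mismatch is in the paper, not in your identities: the paper's induction step records the extra $V$-term coming from the $H$-component as $-y^{2n}p^{(n+1)}V$, i.e.\ it tacitly uses $[yV,H]=-yV$ and drops the $-H(y)V$ contribution. The corrected statements are $\adj^n_{yV}(W)=-(2n-1)\,y^{2n-2}p^{(n)}V+y^{2n-1}p^{(n+1)}H$ and $\adj^m_V(\adj^n_{yV}(W))=-(2n-1+m)\,y^{2n-2+m}p^{(n+m)}V+y^{2n-1+m}p^{(n+1+m)}H$; your induction in $m$, where $[V,H]=-V$ gives $a\mapsto a-1$, is fine once the base case $m=0$ is fixed. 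None of this affects how the lemma is used downstream (the corollary following it and Corollary \ref{cor-unipotentpowers} only need the scalar in front of $y^{2n-2+m}V$ to be a nonzero constant when $n+m=d$, and $2n-1+m\geq 1$ there), but your write-up must either prove the corrected coefficients or it will stall at exactly the bookkeeping step you flagged as dangerous.
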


\begin{proof}
We first proceed by induction on $n$.
\begin{align*}
[yV, W] &= y [V, W] - W(y) V = y p''(z) H - p'(z) V \\
[yV, \adj^n_{yV}(W)] &= -n \cdot y^{2n} \cdot p^{(n+1)} \cdot V + y^{2n+1} \cdot p^{(n+2)} \cdot H \\ &\quad - y^{2n} \cdot p^{(n+1)} \cdot V 
\end{align*}
Next, we proceed by induction on $m$, and note that this formula also holds for $m=0$.
\begin{align*}
[V, \adj^m_{V}(\adj^n_{yV}(W))] &= -(n+m) \cdot y^{2n-1+m} \cdot p^{(n+m+1)} \cdot V \\ &\quad + y^{2n+m} \cdot p^{(n+2+m)} \cdot H \\ &\quad- \cdot y^{2n-1+m} \cdot p^{(n+m + 1)} \cdot V \qedhere
\end{align*}
\end{proof}

\begin{corollary}
The Lie algebra generated by $V, W, yV$ contains the vector fields
\[
y^{d-2} V, \; y^{d-1} V, \; y^{d} V, \; \dots, \; y^{2d-2} V
\]
\end{corollary}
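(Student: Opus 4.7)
The key observation is that for $p \in \CC[z]$ of degree $d$, the derivative $p^{(d)}$ is a nonzero constant while $p^{(d+1)} \equiv 0$. This means that whenever we apply the formulas from Lemma \ref{lem1} or Lemma \ref{lem3} in such a way that the differential order on $p$ in the $H$-component hits $d+1$, that component vanishes, and only a scalar multiple of a power of $y$ times $V$ survives.

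The plan is to extract the claimed powers $y^{d-2}V, y^{d-1}V, \ldots, y^{2d-2}V$ by two different applications of the previously established formulas. First, to obtain $y^{d-2}V$, I would specialize Lemma \ref{lem1} to $n = d$: then $p^{(d+1)} = 0$ kills the $H$-term, and the $V$-coefficient $-(d-1)y^{d-2}p^{(d)}$ is a nonzero scalar multiple of $y^{d-2}V$, which therefore lies in the Lie algebra generated by $V$ and $W$.

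Second, to cover the range $k = d-1, d, \ldots, 2d-2$, I would specialize the second identity of Lemma \ref{lem3} to pairs $(n,m)$ with $n + m = d$ and $n \geq 1$. With this choice, $p^{(n+m+1)} = p^{(d+1)} = 0$ annihilates the $H$-component, and the $V$-component becomes $-d\,p^{(d)} \cdot y^{2n-2+m}V$, which is a nonzero scalar multiple of $y^{d-2+n}V$. Letting $n$ run from $1$ to $d$ (so $m = d-n$ runs from $d-1$ down to $0$) produces exactly $y^{d-1}V, y^dV, \ldots, y^{2d-2}V$.

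There is no real obstacle here: all of the required iterated brackets $\adj_V^n(W)$ and $\adj_V^m(\adj_{yV}^n(W))$ are elements of the Lie algebra generated by $V, W, yV$ by construction, and the identities of Lemmas \ref{lem1} and \ref{lem3} compute them explicitly. The only point to check is the degeneration of the $H$-term, which follows immediately from $\deg p = d$. Combining the two cases yields the full list $y^{d-2}V, y^{d-1}V, \ldots, y^{2d-2}V$, as claimed.
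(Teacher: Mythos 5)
Your argument is correct and is precisely the computation the paper intends: the corollary is stated without proof, but it is meant to follow from Lemma \ref{lem1} with $n=d$ (giving $y^{d-2}V$, as also noted in the preceding remark) and from Lemma \ref{lem3} with $n+m=d$, where $p^{(d+1)}=0$ kills the $H$-term and $p^{(d)}$ is a nonzero constant, yielding $y^{d-2+n}V$ for $n=1,\dots,d$. Nothing is missing.
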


\begin{lemma}
\label{lem4}
\begin{equation*}
\adj^n_V([y^{k} V, W]) = y^{k + n} \cdot p^{(n+2)} \cdot H - (k + n) \cdot y^{k - 1 + n} \cdot p^{(n + 1)} \cdot V 
\end{equation*}
\end{lemma}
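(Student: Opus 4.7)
The plan is to establish the formula by induction on $n \geq 0$, starting from a direct computation of $[y^k V, W]$ for the base case and then iterating the action of $\adj_V$, exactly in the spirit of Lemma \ref{lem3}.

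For the base case $n = 0$, I would use the Leibniz rule for the Lie bracket and the fact that $V(y) = 0$ and $W(y) = p'(z)$:
\begin{equation*}
[y^k V, W] = y^k [V, W] - W(y^k) V = y^k p''(z) H - k \, y^{k-1} p'(z) V,
\end{equation*}
which matches the right-hand side at $n = 0$.

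For the inductive step, assume the formula holds for some $n \geq 0$ and compute $[V, \adj_V^n([y^k V, W])]$. The two essential evaluations are $V(y^m p^{(j)}) = y^{m+1} p^{(j+1)}$ (since $V$ kills $y$ and acts on $z$ by multiplication with $y$), together with the standard identities $[V, f H] = V(f) H - f V$ (using $[V,H] = -V$) and $[V, f V] = V(f) V$. Applying these to the two summands of the induction hypothesis, the $H$-term contributes a new $y^{k+n+1} p^{(n+3)} H$ piece plus an unwanted $-y^{k+n} p^{(n+2)} V$ piece; the $V$-term contributes $-(k+n) y^{k+n} p^{(n+2)} V$. Combining these yields exactly
\begin{equation*}
y^{k+n+1} p^{(n+3)} H - (k + n + 1) \, y^{k + n} p^{(n+2)} V,
\end{equation*}
which is the claimed identity with $n$ replaced by $n+1$.

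There is no real obstacle here — the lemma is a routine two-line induction of the same type as Lemma \ref{lem1} and Lemma \ref{lem3}. The only point worth attention is correctly accounting for the $-f V$ contribution coming from $[V, H] = -V$ when differentiating the $H$-summand, since this term combines with the derivative of the $V$-summand to produce the coefficient $k + n + 1$ (rather than $k + n$ or $k + n + 2$); keeping the bookkeeping straight is the only thing that could go wrong.
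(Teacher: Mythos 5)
Your proposal is correct and follows exactly the same route as the paper: the base case $n=0$ is the direct computation $[y^kV,W]=y^kp''H-ky^{k-1}p'V$, and the inductive step applies $\adj_V$ using $V(y^mp^{(j)})=y^{m+1}p^{(j+1)}$, $[V,H]=-V$, and $[V,fV]=V(f)V$, with the $-fV$ contribution from the $H$-term combining with the derivative of the $V$-term to give the coefficient $k+n+1$. Nothing is missing.
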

\begin{proof}
\begin{align*}
[y^{k} V, W]) &= y^{k} \cdot p'' \cdot H - k \cdot y^{k - 1} \cdot p' \cdot V \\
[V, \adj^n_V([y^{k} V, W])] &= [V, y^{k + n} \cdot p^{(n+2)} \cdot H] \\
&\quad - [V,(k + n) \cdot y^{k - 1 + n} \cdot p^{(n + 1)} \cdot V] \\
&= y^{k + n + 1} \cdot p^{(n+3)} \cdot H - y^{k + n} \cdot p^{(n+2)}  \cdot V \\ 
&\quad - (k + n) \cdot y^{k + n} \cdot p^{(n + 2)} \cdot V \\
&= y^{k + n + 1} \cdot p^{(n+3)} \cdot H \\ &\quad - (k + n + 1) \cdot y^{k + n} \cdot p^{(n + 2)} \cdot V \qedhere
\end{align*}
\end{proof}

\begin{corollary}
\label{cor-unipotentpowers}
The Lie algebra generated by $V, W, yV$ contains the vector fields
\[
y^{n} V, \quad n \geq d-2
\]
\end{corollary}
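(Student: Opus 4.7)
The plan is to bootstrap from the preceding corollary, which already delivers the contiguous block $y^{d-2}V, y^{d-1}V, \ldots, y^{2d-2}V$ inside the Lie algebra, by turning Lemma~\ref{lem4} into a recursion that shifts the exponent of $y$ in pure $V$-direction.

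First, I would specialize Lemma~\ref{lem4} to the choice $n := d-1$. Since $\deg p = d$, the derivative $p^{(n+2)} = p^{(d+1)}$ vanishes identically, so the $H$-component of the formula drops out and what survives is the pure $V$-component
\[
\adj_V^{d-1}\bigl([y^k V, W]\bigr) = -(k+d-1)\, p^{(d)}\, y^{k+d-2} V,
\]
whose scalar prefactor is nonzero for every $k \geq 0$, because $p^{(d)}$ is the nonzero constant $d! \cdot (\text{leading coefficient of } p)$. This gives the inductive step: whenever $y^k V$ lies in the Lie algebra generated by $V, W, yV$, so does $y^{k+d-2} V$.

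Second, I would iterate this shift, feeding it the base block provided by the preceding corollary. The combinatorial point is that the base block $\{d-2, d-1, \ldots, 2d-2\}$ consists of $d+1$ consecutive integer exponents, while the recursion shifts by only $d-2$. The once-shifted block $\{2d-4, \ldots, 3d-4\}$ therefore overlaps the original, the twice-shifted block overlaps the once-shifted one, and so on; a straightforward induction on the number of shifts shows that the union of all these blocks exhausts the entire half-line $\{n \in \mathbb{Z} : n \geq d-2\}$.

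The main obstacle has already been absorbed into the derivation of Lemma~\ref{lem4}; what is left to verify is merely the vanishing of $p^{(d+1)}$ (immediate from $\deg p = d$), the non-vanishing of the scalar prefactor (equally immediate), and the elementary combinatorial sweep described above.
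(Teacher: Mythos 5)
Your proposal is correct and follows essentially the same route as the paper: specialize Lemma~\ref{lem4} to $n=d-1$ so that $p^{(d+1)}\equiv 0$ kills the $H$-component, leaving a nonzero multiple of $y^{k+d-2}V$, and then iterate this shift by $d-2$ starting from the block $y^{d-2}V,\dots,y^{2d-2}V$ supplied by the preceding corollary. You spell out the overlap of consecutive shifted blocks and the non-vanishing of the scalar prefactor more explicitly than the paper does; note only that (exactly as in the paper's own proof) the induction requires the shift $d-2$ to be positive, so the argument as written addresses $d\geq 3$.
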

\begin{proof}
We set $n = d - 1$ to obtain a vector field which is a multiple of $V$. The power of $y$ in the coefficient of $V$  is $d + k - 2$. By the previous calculations, we can assume to have already obtained the powers $k = d-2, \dots, 2d-2$, and hence now obtain 
$2d - 4, \dots, 3d - 4$. We then proceed by induction.
\end{proof}

\begin{remark}
All the vector fields involved in the calculations in this section preserve the volume form $\omega$. Therefore, one might argue that the calculations could be simplified by using the Poisson bracket, see Section \ref{sec-volume}. However, in Section \ref{sec-suspension} we will work in a more general situation where a volume form might not exist, and hence it is actually necessary to carry out the computations using Lie bracket of vector fields.
\end{remark}

\begin{lemma}
\label{lem-interpolate}
Let $a_1, \dots, a_m \in \CC^\ast$ be pairwise disjoint points. For any $r \in \NN$ there exists a polynomial $f(w) \in w^r \CC[w]$ 
such that $f(a_1) = \dots = f(a_{m-1}) = 0$ and $f(a_m) = 1$.
\end{lemma}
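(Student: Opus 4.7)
The plan is to reduce the claim to ordinary Lagrange interpolation. A polynomial of the prescribed form is precisely one of the shape $f(w) = c_0 + w^d h(w)$ with $c_0 \in \CC$ and $h \in \CC[w]$, and conversely any such expression has the desired form. The interpolation requirements then become
\[
c_0 + a_i^d \, h(a_i) = \delta_{i m}, \qquad i = 1, \dots, m,
\]
where $\delta_{im}$ denotes the Kronecker symbol. I would choose $c_0$ so that the remaining conditions are solvable for $h$ by classical Lagrange interpolation at the nonzero $a_i$'s.

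If all $a_i$ are nonzero, I would simply take $c_0 = 0$; the conditions reduce to $h(a_i) = \delta_{im}/a_i^d$ at $m$ distinct points, and Lagrange interpolation produces some $h \in \CC[w]$ of degree at most $m-1$ solving them. If instead some $a_j = 0$ (necessarily unique, by distinctness of the points), then evaluating $f$ at $a_j$ forces $c_0 = \delta_{jm}$. Making this choice, the condition at $a_j$ becomes automatic, while for $i \neq j$ one has $a_i \neq 0$ and the requirements read $h(a_i) = (\delta_{im} - \delta_{jm})/a_i^d$ at $m-1$ distinct nonzero points, again solved by Lagrange interpolation. In either case, setting $f(w) = c_0 + w^d h(w)$ produces the required polynomial.

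There is no real obstacle here; the only mild subtlety is the compatibility condition on $c_0$ forced by the possible presence of $0$ among the interpolation points, which the case split handles transparently. One could alternatively phrase the argument via the Chinese Remainder Theorem for $\CC[w]$ modulo $w^d$ and $\prod_i (w - a_i)$, but the direct Lagrange approach seems most concrete.
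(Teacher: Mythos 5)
Your proof is correct, and it takes a genuinely different route from the paper. The paper constructs $f$ as an explicit product $c \cdot \prod_{i=1}^{m-1} \bigl( (w/a_i)^{k_i} - 1 \bigr)$, where each factor already has the admissible ``gap'' form and vanishes at $a_i$, the exponents $k_i \geq d$ are chosen so that no factor vanishes at $a_m$ (a small extra step, needed when some ratio $a_m/a_i$ is a root of unity), and the factor for a point at the origin is replaced by $w^{k_i}$. You instead observe that the admissible polynomials are exactly those of the shape $c_0 + w^d h(w)$, which turns the problem into a linear interpolation problem for $h$ solved by Lagrange interpolation, with the value of $c_0$ forced only when $0$ occurs among the nodes. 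Your parametrization makes the linear structure of the constraint set transparent, avoids the root-of-unity consideration entirely, and yields the explicit degree bound $\deg f \leq d + m - 1$; the paper's construction, in exchange, gives a closed-form product whose factors are each individually of the required form. Both arguments handle the case $0 \in \{a_1, \dots, a_{m-1}\}$ by an analogous case split, and both are complete.
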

\begin{proof}
We can find $f$ in the following form:
\[
f(w) = c \cdot w^r \cdot (w - a_1) \cdots (w - a_{m-1})
\] 
where $c \in \CC$ is chosen such that $f(a_m) = 1$.
\end{proof}

\begin{lemma}
\label{lem-nonvanish}
The vector fields $V$ and $W$ never vanish on a smooth Danielewski surface. Moreover, they span the tangent space $T_{(x,y,z)} Z_p$ except in the points 
$(x,y,z)$ with $p'(z) = 0$ and $x y \neq 0$. 
\end{lemma}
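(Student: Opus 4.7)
The plan is to argue directly from the coordinate expressions
\[
V = (p'(z),\, 0,\, y), \qquad W = (0,\, p'(z),\, x)
\]
viewed as vectors in the ambient $\CC^3$. Since $Z_p$ is smooth of complex dimension $2$, the two vector fields span $T_{(x,y,z)} Z_p$ if and only if they are $\CC$-linearly independent in $\CC^3$, so everything reduces to a linear-algebra case analysis governed by whether $p'(z)$ vanishes.

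First I would dispose of the non-vanishing claim. Suppose $V = 0$ at some $(x,y,z) \in Z_p$; then $p'(z) = 0$ and $y = 0$, so the defining equation $xy = p(z)$ forces $p(z) = 0$ as well. Thus $z$ would be a common zero of $p$ and $p'$, contradicting the hypothesis that $p$ has only simple zeros. The same argument applied to $W$ handles that case symmetrically.

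Next I would treat the spanning statement by distinguishing the two cases. If $p'(z) \neq 0$, then inspecting the first two coordinates of $V$ and $W$ immediately shows linear independence: any linear relation $\alpha V + \beta W = 0$ forces $\alpha p'(z) = 0$ and $\beta p'(z) = 0$, hence $\alpha = \beta = 0$. If instead $p'(z) = 0$, then $V$ and $W$ collapse to $y\,\partial_z$ and $x\,\partial_z$ respectively; these are manifestly proportional (and non-zero by the first paragraph, so exactly one of $x,y$ could a priori vanish), so they span at most a one-dimensional subspace of $T_{(x,y,z)} Z_p$.

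The only remaining point — and the one I would double-check carefully — is that the putative extra exceptional locus $\{p'(z) = 0,\ xy = 0\}$ is actually empty on $Z_p$. But if $p'(z) = 0$ and $xy = 0$, then $p(z) = xy = 0$, so $z$ is again a common zero of $p$ and $p'$, contradicting simple zeros. Hence the set of points where $V$ and $W$ fail to span is precisely $\{p'(z) = 0,\ xy \neq 0\}$, matching the claim. No serious obstacle appears here; the main subtlety is just remembering that the simple-zeros hypothesis is doing the work in both parts of the proof.
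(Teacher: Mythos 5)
Your proof is correct and follows essentially the same route as the paper: rule out vanishing of $V$ and $W$ via the simple-zeros hypothesis, then observe that linear dependence forces $p'(z)=0$, which on $Z_p$ forces $p(z)=xy\neq 0$. You simply spell out the linear-algebra case distinction that the paper's one-line argument leaves implicit.
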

\begin{proof}
The vector field $V$ could only vanish if $y = 0 \Longrightarrow p(z) = 0$ and $p'(z) = 0$ which is excluded since $p$ has only simple zeros. Similarly, $W$ can't vanish either.
If $V$ and $W$ are linearly dependent, then necessarily $p'(z) = 0 \Longrightarrow p(z) = xy \neq 0$.
\end{proof}

\begin{lemma}
\label{lem-generalpos}
Let $p_1, \dots, p_m, q \in Z_p$ be pair-wise different points. Then there exist flow times $t, s \in \CC$ such that
$p_1^\prime := \psi_s(\varphi_t(p_1)), \dots, \\ p_m^\prime := \psi_s(\varphi_t(p_m)), q^\prime :=\psi_s(\varphi_t(q)) \in Z_p$ are such that:
\begin{enumerate}
\item Their $z$-coordinate satisfies $p'(z) \neq 0$ and $p(z) \neq 0$.
\item Their $x$-coordinates are pair-wise different and non-zero.
\item Their $y$-coordinates are pair-wise different and non-zero.
\end{enumerate}
\end{lemma}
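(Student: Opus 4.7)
My plan is a standard generic-position argument. For each of the three conditions I will show that the set of ``bad'' flow times $(t,s) \in \CC^2$ where the condition fails is contained in a proper algebraic subvariety of $\CC^2$. A finite union of such subvarieties is again proper in the irreducible variety $\CC^2$, so its complement is Zariski-dense, and any $(t,s)$ in this complement simultaneously achieves (1), (2), (3).

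First I would write out the coordinates of $\psi_s(\varphi_t(p_i))$ explicitly as polynomials in $(t,s)$ using the flow formulas from the introduction. Denote the result by $(X_i(t,s), Y_i(t,s), Z_i(t,s))$. Since $\psi_s$ preserves the $x$-coordinate, $X_i$ depends only on $t$, and when $y_i \neq 0$ one has the clean form $X_i(t) = p(z_i + y_i t)/y_i$. The new $z$-coordinate is then $Z_i(t,s) = z_i + y_i t + X_i(t)\cdot s$, and $Y_i$ is read off from the defining equation $X_i Y_i = p(Z_i)$.

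For condition (1), I would verify that $Z_i(t,s)$ is a non-constant polynomial: the coefficient of $s$ equals $X_i(t)$, which cannot vanish identically, because otherwise $x_i = 0$ and $p'(z_i) = 0$ simultaneously, and (to kill the higher $t$-powers) $y_i = 0$ as well, forcing $p(z_i) = x_i y_i = 0$ together with $p'(z_i)=0$ and contradicting the simple-zero hypothesis on $p$. Consequently $p(Z_i(t,s))$ and $p'(Z_i(t,s))$ are nonzero polynomials in $(t,s)$ whose vanishing loci are proper algebraic subsets of $\CC^2$.

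The main work lies in conditions (2) and (3): for each pair of distinct indices I must show that $X_i - X_j$ and $Y_i - Y_j$ are not identically zero polynomials, since each then cuts out a proper subvariety. For (3) this should follow easily, since $\psi_s \circ \varphi_t$ is a biholomorphism of $Z_p$ and hence separates the distinct starting points for generic $(t,s)$. For (2) the situation is more delicate, because $X_i$ depends only on $t$: the identity $X_i(t) \equiv X_j(t)$ would mean that the $\varphi$-orbits of $p_i$ and $p_j$ project onto the same curve in the $x$-coordinate. This is the step I expect to be the main obstacle, and it seems to require a careful case analysis using the defining equation $xy = p(z)$ and the simple-zero hypothesis -- possibly supplemented by a preliminary flow step to break any residual degeneracy among starting points sharing the same orbit projection. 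Granting this, the union of the three bad sets remains a proper subvariety of $\CC^2$, and the lemma follows.
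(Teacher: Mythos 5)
Your overall framework---write the image coordinates as polynomials in $(t,s)$ and show each ``bad'' set is a proper algebraic subvariety of $\CC^2$---is essentially the paper's argument, and your treatment of condition (1) (the $s$-coefficient of $Z_i(t,s)$ is $X_i(t)\not\equiv 0$) is correct and complete. But the two substantive conditions are not actually proved. For condition (2) you explicitly write ``granting this'': the claim $X_i(t)\not\equiv X_j(t)$ is the crux of the lemma, and you leave it open. The missing idea is the following reduction: once condition (1) is arranged, all points have $x\neq 0\neq y$ and $y=p(z)/x$, so two \emph{distinct} points sharing an $x$-coordinate must have distinct $z$-coordinates; then $X_i(t)=p(z_i+ty_i)/y_i$ and $X_j(t)=p(z_j+ty_j)/y_j$ are two explicit degree-$d$ polynomials in $t$ which one must show are distinct, whence they agree for only finitely many $t$. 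This is not a formality: for $d=1$ one has $X_i(t)=x_i+a_1t$, so two points with equal $x$-coordinates are \emph{never} separated by $\psi_s\circ\varphi_t$, i.e.\ the step you deferred genuinely depends on the shape of $p$ and cannot simply be granted.

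For condition (3) the justification you give is a non sequitur: the fact that $\psi_s\circ\varphi_t$ is a biholomorphism guarantees only that the image points are distinct as points of $Z_p$; it says nothing about whether the single coordinate function $y$ separates them, and distinct points of $Z_p$ can of course share a $y$-coordinate. The correct argument is symmetric to the one for (2), not easier: fix a generic $t$ so that (1) and (2) hold for the intermediate points $\varphi_t(p_i)$; two of these sharing a $y$-coordinate must then have distinct $z$-coordinates, and $\psi_s$ changes their $y$-coordinates according to the degree-$d$ polynomials $s\mapsto p(z+sx)/x$, which one again checks are distinct, so a generic $s$ separates them while leaving the $x$-coordinates untouched and preserving (1). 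As written, your proposal establishes only condition (1) and postpones exactly the two points where the content of the lemma lies.
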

\begin{proof}
\[
\begin{split}
&\psi_s(\varphi_t(x,y,z)) = \\ &(x + t p'(z) + \dots, y + s \cdot p'(z + yt) + \dots, z + yt + sx + st p'(z) + \dots)
\end{split}
\]
For any points with $z$-coordinate such that $p'(z) = 0$, we have $p(z) \neq 0 \Longleftrightarrow xy \neq 0$. The zeros and critical points of $p$ are isolated points in $\CC$. Hence, we can read off from the formula above, that the first condition is satisfied for a dense open set of flow times $(t, s) \in \CC^2$. Assume now that two different points have the same $x$-coordinate. Since $0 \neq p(z) = xy$ determines $y$, their $z$-coordinates must differ. Flowing along $V$ changes their $x$-coordinates for an open and dense set of times $t \in \CC$.
Their $x$-coordinates will change according to $p(z + t y) / y$ and hence must be different for all but a finite number of exceptions in $t \in \CC$. 
Similarly, we treat the $y$-coordinates by flowing along $W$.
Since the finite intersection of open and dense sets is dense, we find the desired flow times $(t,s) \in \CC^2$.  
\end{proof}

\begin{theorem}
\label{thm-unipotent}
The group generated by the unipotent groups corresponding to the four locally nilpotent derivations $V, W, yV, xW$ acts infinitely transitively on the smooth Danielewski surface $x y = p(z)$.
\end{theorem}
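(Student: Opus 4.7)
The plan is to prove $m$-transitivity of the group $G$ generated by the flows of $V, W, yV, xW$ for every $m \in \NN$, by establishing infinitesimal transitivity at general-position $m$-tuples, upgrading this to openness of $G$-orbits via Proposition \ref{prop-lieapprox}, and concluding by connectedness of $Z_p^m$ minus its fat diagonal. Let $\mathfrak{h}$ denote the Lie algebra generated by $V, W, yV, xW$. By Corollary \ref{cor-unipotentpowers} and its symmetric counterpart (with $W, xW$ in place of $V, yV$), $\mathfrak{h}$ contains $V$ and all $y^n V$ for $n \geq d-2$, together with $W$ and all $x^n W$ for $n \geq d-2$.

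For infinitesimal transitivity, fix an $m$-tuple $P = (p_1, \ldots, p_m)$ of pairwise distinct points on $Z_p$. Using the flows $\varphi_t, \psi_s \in G$ of $V$ and $W$ together with Lemma \ref{lem-generalpos}, we may assume $P$ is in general position: its $y$- and $x$-coordinates are pairwise distinct and $p(z(p_i)), p'(z(p_i)) \neq 0$ at each point. Lemma \ref{lem-interpolate} then supplies, for each $i$, a polynomial $f_i(y) = c_0 + \sum_{k \geq d} c_k y^k$ with $f_i(y(p_j)) = \delta_{ij}$. Since its only nonzero monomials have either degree $0$ or degree at least $d$ (hence at least $d-2$), the shear $f_i(y) V$ is a $\CC$-linear combination of $V$ and of the $y^k V$ with $k \geq d-2$, and therefore lies in $\mathfrak{h}$; it evaluates to $V(p_i)$ at $p_i$ and to $0$ at every other $p_j$. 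The symmetric construction in the $x$-variable produces vector fields in $\mathfrak{h}$ that evaluate to $W(p_i)$ at $p_i$ and to $0$ elsewhere. By Lemma \ref{lem-nonvanish}, $V(p_i)$ and $W(p_i)$ span $T_{p_i} Z_p$ at every general-position point, so the evaluation map $\mathfrak{h} \to \bigoplus_{i=1}^m T_{p_i} Z_p$ is surjective.

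Next, choose $X_1, \ldots, X_{2m} \in \mathfrak{h}$ whose values at $P$ form a basis of $\bigoplus T_{p_i} Z_p$, and define a holomorphic map $F \colon U \to Z_p^m$ on a small neighborhood $U$ of $0 \in \CC^{2m}$ by $F(t_1, \ldots, t_{2m}) = (g_{\mathbf{t}}(p_1), \ldots, g_{\mathbf{t}}(p_m))$, where $g_{\mathbf{t}}$ is the composition of the time-$t_j$ flows of the $X_j$'s on a neighborhood of $p_1, \ldots, p_m$. Then $F(0) = P$ and $dF|_0$ is an isomorphism, so $F$ is a local biholomorphism at $0$ and $F(U)$ contains a neighborhood of $P$. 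Since each $X_j \in \mathfrak{h}$ is a Lie polynomial in the $\RR$-complete generators $V, W, yV, xW$, Proposition \ref{prop-lieapprox} (applied in a parametric Lie--Trotter fashion to obtain holomorphic dependence on the continuous times) yields a sequence of maps $F_n \colon U \to Z_p^m$ built entirely from flows of the generators---so the image of $F_n$ lies in $G \cdot P$---with $F_n \to F$ uniformly on a compact neighborhood of $0$. Cauchy estimates give $dF_n|_0 \to dF|_0$, so a quantitative inverse function theorem produces a fixed-size open ball inside $F_n(U)$ around $F_n(0)$ for all sufficiently large $n$; since $F_n(0) \to P$, this ball eventually contains a neighborhood of $P$, and therefore so does $G \cdot P$.

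By $G$-equivariance the orbit $G \cdot P$ is open at every general-position tuple it contains, and Lemma \ref{lem-generalpos} ensures that every $G$-orbit meets the general-position locus. Hence every $G$-orbit is open in $Z_p^m \setminus \Delta$, where $\Delta$ denotes the fat diagonal. As $Z_p$ is irreducible of complex dimension $2$, $\Delta$ has complex codimension at least $2$ in $Z_p^m$, so $Z_p^m \setminus \Delta$ is connected; a connected space admits no nontrivial partition into disjoint nonempty open sets, so there is only one orbit and $G$ acts $m$-transitively. The main obstacle is the approximation in the preceding paragraph: Proposition \ref{prop-lieapprox} is stated at a single fixed time, so some additional care---for instance via an Euler / Lie--Trotter scheme, or an inspection of the proof of the proposition with parameters---is required to obtain uniform holomorphic dependence of the approximating compositions on the parameters $t_1, \ldots, t_{2m}$, which is what allows Cauchy estimates to transfer the local-biholomorphism property from $F$ to $F_n$.
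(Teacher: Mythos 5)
Your argument is correct in substance and draws on exactly the same computational inputs as the paper --- Corollary \ref{cor-unipotentpowers} for which powers $y^kV$, $x^kW$ lie in the Lie algebra, Lemma \ref{lem-interpolate} for shears vanishing at all but one of the points, Lemmas \ref{lem-nonvanish} and \ref{lem-generalpos} for spanning and general position, and Proposition \ref{prop-lieapprox} for the approximation --- but the topological endgame is genuinely different. The paper fixes $p_1,\dots,p_{m-1}$ and moves $p_m$ to the target along an explicit path in $Z_p$ chosen to avoid the fixed points, the locus $p'(z)=0$, and all coincidences of $x$- or $y$-coordinates; it covers the path by finitely many local submersions $(t,s)\mapsto \psi_{f(x)s}\circ\varphi_{g(y)t}$ and chains together their approximations. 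You instead establish surjectivity of the evaluation map of the Lie algebra at a general-position $m$-tuple, deduce openness of the $G$-orbits in $Z_p^m\setminus\Delta$, and conclude by irreducibility of $Z_p^m$. Your route treats all $m$ points symmetrically and replaces the path construction by a cheap connectedness argument; the paper's route is more explicit about the automorphism produced. Both versions hinge on the same delicate point, which you correctly flag: Proposition \ref{prop-lieapprox} is stated for a single flow time, while one needs the approximating compositions to depend holomorphically (or at least $\cont^1$) on the parameters so that the rank condition survives the approximation. The paper disposes of this in one sentence (``we can choose the approximation such that it is again submersive''), so your treatment is no less rigorous here; in either version the clean fix is to observe that the Euler/Lie--Trotter compositions underlying Proposition \ref{prop-lieapprox} can be arranged with times depending holomorphically on the parameters, after which Cauchy estimates transfer the local-biholomorphism property exactly as you describe.
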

\begin{proof}
Let $\mathfrak{g}$ denote the Lie algebra generated by $V, W, yV, xW$ and let $G$ denote the group generated by the flows of the LNDs $V, W, yV, xW$. 

The statement of the theorem follows directly from the following

\textbf{Claim:} For any given pair-wise different points $p_1, \dots, p_m, q$ in the Danielewski surface, there exists an automorphism $\alpha \in G$ such that 
\begin{equation}
\alpha(p_1) = p_1, \dots, \alpha(p_{m-1}) = p_{m-1}, \alpha(p_1) = q
\end{equation}
We now prove this claim. Since the flows of $V$ and $W$ are contained in the group $G$, we can assume, without loss of generality: by Lemma \ref{lem-generalpos} all the $x$-coordinates of the points $p_1, \dots, p_m, q$ are pair-wise different and non-zero, and all the $y$-coordinates of the points $p_1, \dots, p_m, q$ are pair-wise different and non-zero; moreover, by and Lemma \ref{lem-nonvanish}, $V$ and $W$ are spanning the tangent space of $Z_p$ in each of these points. 

Let $\gamma \colon [0,1] \to Z_p$ be a path in the Danielewski surface that connects $p_m$ to $q$, but avoids the points $p_1, \dots, p_{m-1}$ and, moreover, avoids the set $\{p'(z) = 0\} \cup \{x = 0\} \cup \{y = 0\}$, and is such that for every $\tau \in [0, 1]$ the point $\gamma(\tau)$ never has a common $x$-coordinate or a common $y$-coordinate with any of the points $p_1, \dots, p_{m-1}$. All these conditions remove only sets of complex codimension $1$ from the surface $Z_p$ and can therefore easily be satisfied. 

For every point $\gamma(\tau) \in Z_p, \tau \in [0,1]$, Lemma \ref{lem-interpolate} furnishes a polynomial $f(x) \in \CC[x]$ that vanishes in the $x$-coordinates of $p_1, \dots, p_{m-1}$ but not in the $x$-coordinate of $\gamma(\tau)$. Similarly, we obtain a polynomial $g(y) \in \CC[y]$ that vanishes in the $y$-coordinates of $p_1, \dots, p_{m-1}$ but not in the $y$-coordinate of $\gamma(\tau)$. According to Corollary \ref{cor-unipotentpowers} and Lemma \ref{lem-interpolate}, the degrees of $f$ and $g$ can be chosen such that $f W$ and $g W$ lie in the Lie algebra generated by $V, W, yV, xW$.

The map $\CC^2 \ni (t,s) \mapsto \psi_{f(x) s} \circ \varphi_{g(y) t}(\gamma(\tau)) \in Z_p$ is a submersion near $(0,0)$ that fixes $p_1, \dots, p_{m-1}$.   
Since $[0,1]$ is compact, we find a finite partition $0 = \tau_0 < \tau_1 < \dots < \tau_M = 1$ of $[0,1]$ such that
 for each closed interval $[\tau_j, \tau_{j+1}]$ of the partition and a point $\tau_j^\prime \in [\tau_j, \tau_{j+1}]$
 there exists an open neighborhood of $\gamma([\tau_j, \tau_{j+1}])$ where $\CC^2 \ni (t,s) \mapsto \alpha_j := \psi_{f(x) s} \circ \varphi_{g(y) t}(\gamma(\tau_j^\prime)) \in Z_p$ is submersive onto this neighborhood.
 
By Proposition \ref{prop-lieapprox} we can now approximate each $\alpha_j$ arbitrarily well on compacts by certain compositions $\widetilde{\alpha}_j$ of the flows of the generators $V, W, yV xW$ of the Lie algebra $\mathfrak{g}$. We can choose the approximation such that it is again submersive onto $\gamma([\tau_j, \tau_{j+1}])$. Thus, by the implicit function theorem we can choose times $(t,s)$ for each $j$ such that $\widetilde{\alpha}_j(\tau_j) = \widetilde{\alpha}_j(\tau_{j+1})$ while fixing $p_1, \dots, p_{m-1}$. 
Then, $\alpha = \widetilde{\alpha}_0 \circ \dots \circ \widetilde{\alpha}_{M-1} \in G$ is the desired automorphism.
\end{proof}

\begin{remark}
The smoothness of the Danielewski surface $Z_p$ is necessary for Theorem \ref{thm-unipotent}, but only needed in Lemma \ref{lem-nonvanish} and in Lemma \ref{lem-generalpos}. The other remaining calculations remain valid for a singular Danielewski surface. The same strategy of proof can then be applied to prove infinite transitivity on the smooth locus of a singular Danielewski surface.
\end{remark}

\section{Lie algebra of volume-preserving polynomial vector fields}

\label{sec-volume}

The smooth Danielewski surface $Z_p$ is equipped with a complex algebraic volume form 
\begin{equation}
\omega := \frac{dx \wedge dz}{x} = \frac{dz \wedge dy}{y} = \frac{dx \wedge dy}{p'(z)} \label{eq-volumeform}
\end{equation}

Since $Z_p$ is a surface, the volume form is in fact also a symplectic form, and we can use the formalism provided by Hamiltonian systems. 

It is elementary to find the following Hamiltonian functions for the vector fields $V, W, H$:
\begin{align*}
i_V \omega &= d y \\
i_W \omega &= d (-x) \\
i_{H} \omega &= d (-z)
\end{align*}

We can read off the Poisson bracket directly from Equation \eqref{eq-volumeform} and obtain:
\begin{align*}
\{x, z\} &= -x \\ 
\{z, y\} &= -y \\
\{x, y\} &= -p'(z)
\end{align*}
which corresponds to
\begin{align*}
[-W, -H] &= W \\ 
[-H, V] &= -V \\
[-W, V] &= p''(z) H \qedhere
\end{align*}

Using the identity $x y = p(z)$, every polynomial Hamiltonian function can be written uniquely (up to an additive constant) in the following form:
\begin{equation}
\label{eq-Hamiltonpossible}
H \in x\CC[x, z] \oplus y\CC[y, z] \oplus \CC[z]
\end{equation}

By the result of Kutzschebauch and Leuenberger \cite{MR3495426} we know that the shear vector fields of $V$ and $W$, and, in fact, even all locally nilpotent derivations together, do not generate the Lie algebra of volume-preserving polynomial vector fields on a smooth Danielewski surface. Hence, it is clear that our result will necessarily involve also the vector field $H$ and its shears. 

\begin{theorem}
\label{thm-volume}
The Lie algebra of volume-preserving holomorphic vector fields on a smooth Danielewski surface is generated by the following complete volume-preserving vector fields:
\begin{align*}
y^n V, &\quad n = 0, \dots, \max(1, d-3) \\
x^n W, &\quad n = 0, \dots, \max(1, d-3) \\
z^m H, &\quad m = 0, \dots, \max(2, d-3)
\end{align*}
\end{theorem}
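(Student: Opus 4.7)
The plan is to pass to the Hamiltonian picture. On the symplectic surface $(Z_p,\omega)$, the assignment $\Theta\mapsto H_\Theta$ defined by $i_\Theta\omega = dH_\Theta$ identifies the Lie algebra of volume-preserving polynomial vector fields with $\CC[Z_p]/\CC$ equipped with the Poisson bracket computed in the preceding section. Completeness and $\omega$-preservation of the proposed generators are immediate from the shear construction, since $y^n$, $x^n$, and $z^m$ lie in the kernels of $V$, $W$, and $H$ respectively. The Hamiltonians of $y^nV$, $x^nW$, and $z^mH$ are, up to nonzero scalar factors, $y^{n+1}$, $x^{n+1}$, and $z^{m+1}$, so by \eqref{eq-Hamiltonpossible} it suffices to show that these polynomials generate $x\CC[x,z]\oplus y\CC[y,z]\oplus\CC[z]$ under the Poisson bracket.

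I first handle the pure powers of $y$ and $x$. Since $\max\{1,d-3\}\ge 1$, the generator set always contains $V$, $W$, $yV$, and $xW$, so Corollary \ref{cor-unipotentpowers} together with its $x$-symmetric analogue places $y^nV$ and $x^nW$ into the generated Lie algebra for every $n\ge d-2$; combined with the explicit generators for $n\le\max\{1,d-3\}$, whose range overlaps $n\ge d-2$, we obtain $y^nV$ and $x^nW$ for every $n\ge 0$, i.e.\ the Hamiltonians $y^k$ and $x^k$ for every $k\ge 1$. For the mixed monomials I use $\{y^n,z^2\}=-2n\,y^nz$ and, by Leibniz, $\{y^nz^k,z^2\}=-2n\,y^nz^{k+1}$. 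Iteratively bracketing $y^n$ with the Hamiltonian $z^2$ (available because $\max\{2,d-3\}\ge 1$) therefore yields $y^nz^k$ for all $n\ge 1$, $k\ge 0$, and symmetrically $x^nz^k$.

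For the remaining pure powers of $z$, the generators directly supply the Hamiltonians $z, z^2, \dots, z^{\max\{3,d-2\}}$. The key identity for higher degrees is $\{yz^k,x\}=-(p(z)z^k)'$, which follows from $\{x,z\}=x$, $\{x,y\}=p'(z)$ and Leibniz; its right-hand side lies in $\CC[z]$ and has degree $d+k-1$ with nonzero leading coefficient. As $k$ ranges over $\NN$, these brackets supply a polynomial of every degree $\ge d-1$, and together with the initial $z$-Hamiltonians we obtain a polynomial of every positive degree. A degree induction, subtracting the previously extracted lower-degree monomials, then yields $z^m$ for every $m\ge 1$.

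Combining the three stages exhibits every monomial of $x\CC[x,z]\oplus y\CC[y,z]\oplus\CC[z]$ in the generated Lie algebra, which by the Hamiltonian identification proves the theorem. I expect the main subtlety to lie in the degree bookkeeping of the last step: the bound $\max\{2,d-3\}$ is calibrated precisely so that the initial $z$-Hamiltonians reach degree $\max\{3,d-2\}\ge d-2$, which is exactly what is needed to meet the degrees $\ge d-1$ supplied by $(p(z)z^k)'$ without any intervening gap. This interlock must be checked separately for the small-$d$ regime to confirm that no generator can be removed by this route.
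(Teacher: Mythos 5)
Your proposal is correct and follows essentially the same route as the paper's proof: passing to Hamiltonians via the decomposition \eqref{eq-Hamiltonpossible}, using Corollary \ref{cor-unipotentpowers} for the pure powers $y^nV$, $x^nW$, bracketing with the Hamiltonian $z^2$ to produce the mixed monomials, and using $\{xz^k, yz^m\} = (z^{k+m}p)'$ (your $\{yz^k,x\}$ is the special case $m=0$ up to sign) to reach the pure $z$-powers of degree $\geq d-1$. Your explicit degree induction at the end only spells out what the paper leaves terse.
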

\begin{proof}
The Hamiltonian functions corresponding to the given vector fields (up to multiplicative and additive constants) are the following:
\[
y^{n+1}, x^{n+1}, z^{n+1}
\]
By Corollary \ref{cor-unipotentpowers} we obtain the vector fields $y^n V$ and $x^n W$ for all $n \geq 0$. 
We compute the following Poisson brackets by induction on $m$:
\begin{align*}
\{ x^n, z^2 \} &= -2 n x^n z \\
\{ x^n z^m, z^2 \} &= -2 n x^n z^{m+1} \\
\{ y^n, z^2 \} &= 2 n y^n z \\
\{ y^n z^m, z^2 \} &= 2 n y^n z^{m+1}
\end{align*}
This yields the first two summands in Equation \eqref{eq-Hamiltonpossible}. We further observe that
\[
\left\{ x z^k, y z^m  \right\} = -\left(z^{k+m} \cdot  p(z)\right)'
\]
The missing terms from this computation are $z, z^2, \dots z^{d-2}$ which were included in the assumptions of the theorem.
\end{proof}

\section{Direct Products}
\label{sec-products}

In this section we collect two more general results on direct products.

\begin{proposition}
\label{prop-prodline}
Let $Z$ be a complex-affine manifold that admits finitely many LNDs generating an infinitely transitive action. Then $Z \times \CC$ also admits finitely many LNDs generating an infinitely transitive action.
\end{proposition}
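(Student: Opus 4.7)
The plan is to augment the trivial extensions $\tilde D_1,\dots,\tilde D_k$ of the given LNDs on $Z$ with a few additional LNDs on $Z\times\CC$ (writing $w$ for the coordinate on the $\CC$-factor):
\[
\tilde D_1,\ldots,\tilde D_k,\quad w\tilde D_1,\ldots,w\tilde D_k,\quad E:=\partial/\partial w,\quad f(z)\cdot E,
\]
where $f\in\holo(Z)$ is any fixed non-constant regular function. All of these are LNDs: $w\tilde D_i$ is a replica of $\tilde D_i$ because $\tilde D_i(w)=0$, and $f\cdot E$ is a replica of $E$ because $E(f)=0$. Let $G$ denote the group on $Z\times\CC$ generated by their flows; the aim is to show that $G$ acts infinitely transitively.

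Given pairwise distinct $P_1,\dots,P_m,Q\in Z\times\CC$, I would construct $\alpha\in G$ with $\alpha(P_i)=P_i$ for $i<m$ and $\alpha(P_m)=Q$ as a composition $\alpha=\beta^{-1}\circ\tilde\tau^{-1}\circ\phi^{fE}_t\circ\tilde\tau\circ\tilde\sigma\circ\beta$ in three stages. In Stage 1 (separation of $Z$-projections), the time-$s$ flow of $w\tilde D_\ell$ sends $(z,w)$ to $(\phi^{D_\ell}_{sw}(z),w)$, so it displaces points with different $w$-coordinates at different rates in the $Z$-direction. Because infinite transitivity on $Z$ forces $\{D_1,\dots,D_k\}$ to span every tangent space, at every point some $D_\ell$ is non-vanishing, and a composition $\beta$ of flows of $w\tilde D_\ell$'s with generic time parameters yields a configuration in which the $Z$-projections of $\beta(P_1),\dots,\beta(P_m),\beta(Q)$ are pairwise distinct. (Pairs with equal $w$-coordinates are unaffected relatively, but for distinct points this already forces distinct $Z$-projections.)

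In Stage 2 (moving the $Z$-projection of $P_m$), infinite transitivity on $Z$ furnishes $\sigma$ in the subgroup generated by the flows of $D_1,\dots,D_k$ that fixes the projections of $\beta(P_1),\dots,\beta(P_{m-1})$ and sends the projection of $\beta(P_m)$ to that of $\beta(Q)$; its trivial extension $\tilde\sigma$ lies in $G$ and preserves $w$-coordinates. In Stage 3 (adjusting the $w$-coordinate), a second application of infinite transitivity on $Z$ provides $\tau$ that sends the projections of $\beta(P_1),\dots,\beta(P_{m-1})$ into the zero locus $\{f=0\}$ while keeping the projection of $\beta(Q)$ outside it. Then the conjugation $\tilde\tau^{-1}\circ\phi^{fE}_t\circ\tilde\tau$ has the explicit form $(z,w)\mapsto(z,\,w+t\cdot f(\tau(z)))$, which fixes $\beta(P_1),\dots,\beta(P_{m-1})$ and shifts the $w$-coordinate of the image of $\beta(P_m)$ by $t\cdot f(\tau(\beta(Q)_Z))\neq 0$; choose $t$ to reach the correct $w$. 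Applying $\beta^{-1}$ restores the first $m-1$ points to their original positions while leaving the image of $P_m$ equal to $Q$.

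The principal obstacle is the genericity argument in Stage 1: one must verify that the finitely many conditions ``pair $(i,j)$ still has coinciding $Z$-projection after $\beta$'' cut out proper subvarieties of the parameter space of flow times, so that a single generic choice simultaneously resolves all of them without creating new collisions. This reduces to the non-vanishing of some $D_\ell$ at each colliding point, which is guaranteed by the tangent-spanning consequence of infinite transitivity. Once Stage 1 is secured, Stages 2 and 3 follow formally from infinite transitivity on $Z$ combined with the explicit form of the relevant shear flows.
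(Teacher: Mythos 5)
Your argument is correct and is essentially the strategy of the paper: extend the given LNDs trivially, use the replica $w\tilde D_1$ (the paper's $\Omega$) to separate the $Z$-projections of points stacked over a common base point, and use a replica $f\cdot\partial/\partial w$ of the vertical translation (the paper's $\Xi$, with the specific choice $f=z_1\cdots z_N$) to adjust $w$-coordinates selectively; where the paper moves the points into suitable position relative to the fixed zero set $\{z_1\cdots z_N=0\}$ by an element of $G$, you conjugate the shear by $\tilde\tau$ instead, which amounts to the same manoeuvre, and your fix-$(m-1)$-move-one bookkeeping replaces the paper's normalization to a standard configuration. One point you should make explicit: Stage 3 requires $\{f=0\}$ to contain at least $m-1$ points for every $m$, i.e.\ to be infinite, and this is not automatic from ``$f$ non-constant'' alone --- it does hold here because a nowhere-vanishing regular function on affine $Z$ is a unit, hence killed by every LND and so constant on the transitive $G$-orbit, forcing a non-constant $f$ to have a non-empty zero locus, while $\dim Z\ge 2$ (no affine curve satisfies the hypothesis) makes that locus infinite. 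Also note that transitivity only guarantees that \emph{some} $D_\ell$ is non-vanishing at each point, not that the $D_\ell$ span every tangent space; but non-vanishing is all your Stage 1 actually uses, since the orbit map of an LND through a non-fixed point is an injective morphism of $\CC$ into $Z$ and therefore separates the distinct flow times $sw_1\neq sw_2$.
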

\begin{proof}
We may assume that $Z \subset \CC^N$ is a smooth affine subvariety, where the dimension $N$ is minimal in the following, rather weak sense: Every projection of $Z$ to any hyperplane of $\CC^N$ is not injective. We denote the coordinate functions of $\CC^N$ by $z_1, \dots, z_N$. 
Let $\Theta_1, \dots, \Theta_m$ be the finitely many (non-vanishing) LNDs on $Z$ that generate an infinitely transitive action on $Z$. We denote their trivial extension to $Z \times \CC$ the same way. Let $G$ denote the subgroup of automorphisms of $Z \times \CC$ that is generated by the flows of $\Theta_1, \dots, \Theta_m$.
Let $w$ be the variable in $\CC$ and set $\Xi := z_1 \cdot \frac{\partial}{\partial w}$ and $\Omega := w \cdot \Theta_1$ which defines two LNDs on $Z \times \CC$. We claim that the flows of $\Theta_1, \dots, \Theta_m, \Xi, \Omega$ generate an infinitely transitive action on $Z \times \CC$: 

\begin{enumerate}
\item We choose a sequence without accumulation points and without repetition $(s_k')_{k \in \NN} \subset Z \cap (\CC^\ast \times \CC^{N-1})$ of ``standard points'' such that $\Theta_1$ does not vanish in any of them, and such that their $z_1$-coordinates are pairwise different. Note that the algebraic map $(z_1, \dots, z_N) \mapsto z_1$ from $Z$ to $\CC$ is surjective since it can't be constant, and a generic fibre will have co-dimension $1$ in $Z$, and that $\Theta_1$ vanishes at most on a subvariety of codimension $1$.
\item Let $r \in \NN$ and let $p_1 = (p_1',p_1''), \dots, p_r = (p_r',p_r'') \in Z \times \CC$ be given $r$ pairwise distinct points. We prove the claim if we find an automorphism using the flows of $\Theta_1, \dots, \Theta_m, \Xi, \Omega$ that maps $p_j$ to $(s_j'	,0)$ for all $j = 1, \dots, r$.
\item Let $\pi' \colon Z \times \CC \to Z$ and $\pi'' \colon Z \times \CC \to \CC$ be the projections to the respective factors. 
There exists $g \in G$ such that $\pi'(g(p_j)) \subset \{ s_k' \}_{k \in \NN}$ for all $j = 1, \dots, r$. 
\item For $\pi'(g(p_j)) = \pi'(g(p_k))$ and $j \neq k$ we necessarily have that $\pi''(g(p_j)) \neq \pi''(g(p_k))$. Hence, the flow map $F_t$ of $\Omega = w \cdot \Theta_1$ will change their $\pi'$-projections, and for a sufficiently small flow-time $t$ we can ensure that $\pi'(F_t \circ g(p_j)) \neq \pi'(F_t \circ g(p_k))$, and -- in fact -- even pairwise different $z_1$-coordinates for $j \neq k$.
\item Without loss of generality we may assume that $\pi''(F_t \circ g(p_j)) \neq 0$ for $j = 1, \dots, r$. Otherwise, apply the flow map of $\Xi$ for an arbitrarily small time to ensure this holds. Its application will not change the $\pi'$-projections. 
\item There exists $h \in G$ such that the coordinate function $z_1$ evaluated in $\pi'(h \circ F_t \circ g(p_j))$ equals $-\pi''(h \circ F_t \circ g(p_j))$ for all $j = 1, \dots, r$. Note that the $z_1$-coordinates are pairwise different.
\item Applying first the time-$1$ map of $\Xi$ sends $h \circ F_t \circ g(p_j)$ into $Z \times \{0\}$. Then, applying another element from $G$ sends these points to the standard points $(s_j',0)$ for all $j = 1, \dots, r$. \qedhere
\end{enumerate}
\end{proof}

\begin{proposition}
\label{prop-directprod}
Let $X$ and $Y$ be complex-affine manifolds that each admit finitely many locally nilpotent derivations generating an infinitely transitive action on $X$ and $Y$, respectively. 
Then $X \times Y$ admits finitely many locally nilpotent derivations generating an infinitely transitive action on the direct product $X \times Y$.
\end{proposition}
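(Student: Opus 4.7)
The plan is to imitate the argument of Proposition \ref{prop-prodline}, with the auxiliary factor $\CC$ replaced by $Y$. Embed $X \subset \CC^N$ with coordinate functions $z_1,\dots,z_N$ and $Y \subset \CC^M$ with coordinate functions $w_1,\dots,w_M$, and let $\Theta_1,\dots,\Theta_m$ be the given LNDs on $X$ and $\Psi_1,\dots,\Psi_\ell$ those on $Y$. I would extend each trivially to $X\times Y$ and adjoin two ``mixing'' LNDs
\[
\Omega := (w_1\cdots w_M) \cdot \Theta_1, \qquad \Xi := (z_1\cdots z_N) \cdot \Psi_1.
\]
Because $w_1\cdots w_M \in \holo(Y)$ lies in the kernel of the trivially extended $\Theta_1$ (and symmetrically for $\Xi$), each of $\Omega$ and $\Xi$ is a shear of an LND, hence itself an LND on $X\times Y$. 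Let $G$ be the group generated by the flows of these $m+\ell+2$ LNDs, and let $G_X$, $G_Y$ be the subgroups coming from the $\Theta_i$ and the $\Psi_j$, which act independently on the two factors.

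To establish infinite transitivity, it suffices to prove the following standardization: for any $r$ pairwise distinct points $p_1,\dots,p_r\in X\times Y$, some $g\in G$ sends them to a fixed standard configuration whose $X$- and $Y$-projections are each pairwise distinct. The core of the argument is the \emph{separation of projections} by the two mixing LNDs.

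For each pair $(i,j)$ with $\pi_X(p_i)=\pi_X(p_j)$, the points $\pi_Y(p_i)$, $\pi_Y(p_j)$ must be distinct. Using infinite transitivity of $G_Y$ on $Y$, I would first choose $\psi\in G_Y$ (trivially extended) such that, for every such pair, $(w_1\cdots w_M)(\psi(\pi_Y(p_i)))\neq(w_1\cdots w_M)(\psi(\pi_Y(p_j)))$, and such that $\Theta_1$ is non-vanishing at the shared $X$-projection. Applying $\psi$ and then the time-$t$ flow of $\Omega$ for generic small $t$ moves each point's $X$-coordinate by the $\Theta_1$-flow for the effective time $t(w_1\cdots w_M)(\pi_Y(\cdot))$, which now differs across each cluster; this separates the $X$-projections within clusters, and for generic $t$ does not create new coincidences between them. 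A symmetric step using $G_X$ and the flow of $\Xi$ separates the $Y$-projections without destroying the distinctness of $X$-projections. Once both projections are pairwise distinct, $G_X$ and $G_Y$ act independently on the two factors, so applying their respective infinite transitivity coordinatewise sends the points to the standard configuration.

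The main technical obstacle will be the generic-position bookkeeping: verifying that a single element of $G_Y$ (resp.\ $G_X$) simultaneously achieves the finitely many open conditions required for the subsequent $\Omega$-flow (resp.\ $\Xi$-flow) to separate \emph{all} coinciding projections, and that the second separation step does not recreate coincidences among the projections separated in the first. Both are handled by standard arguments: the relevant conditions are open and non-empty by infinite transitivity and the non-vanishing of $\Theta_1, \Psi_1$ at generic points, while the ``bad'' set of flow times is a proper algebraic subset of $\CC$. Careful ordering of the two separation steps, together with a small $G_X$-adjustment between them to ensure $z_1\cdots z_N$ differs within each $Y$-cluster, completes the argument.
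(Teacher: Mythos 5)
Your proposal is correct and follows essentially the same route as the paper: trivially extend the given LNDs, adjoin the two mixing replicas $(w_1\cdots w_M)\cdot\Theta_1$ and $(z_1\cdots z_N)\cdot\Psi_1$, use their flows to separate coinciding $X$- and $Y$-projections, and then apply the factorwise infinite transitivity to reach a standard configuration. If anything, your treatment of the separation step (arranging that the coordinate products differ within each cluster before flowing) is spelled out more carefully than in the paper's own proof.
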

\begin{proof}
Let $\Theta_1, \dots, \Theta_m$ be the finitely many (non-vanishing) LNDs on $X$ that generate an infinitely transitive action on $X$. And let $\Xi_1, \dots, \Xi_n$ be the finitely many (non-vanishing) LNDs on $Y$ that generate an infinitely transitive action on $Y$. We denote the trivial extensions of all these LNDs to the direct product $X \times Y$ by the same symbols.

By $G_X$ and $G_Y$ we denote the subgroups of the automorphism group of $X \times Y$ that are generated by the flows of $\Theta_1, \dots, \Theta_m$ and the flows of $\Xi_1, \dots, \Xi_n$, respectively.

Let $X \subset \CC^M$ and $Y \subset \CC^N$ be embedded as smooth affine subvarieties. We denote the coordinates in $\CC^M$ and $\CC^N$ by $(z_1, \dots, z_M)$ and $(w_1, \dots, w_N)$, respectively.

We choose a sequence without accumulation points and without repetition $(s_k')_{k \in \NN} \subset X \cap (\CC^\ast)^M$ of ``standard points'' such that $\Theta_1$ does not vanish in any of them. 
And we choose a sequence without accumulation points and without repetition $(s_k'')_{k \in \NN} \subset Y  \cap (\CC^\ast)^N$ of ``standard points'' such that $\Xi_1$ does not vanish in any of them. 

Let $r \in \NN$ and let $p_1 = (p_1',p_1''), \dots, p_r = (p_r',p_r'') \in X \times Y$ be given $r$ pairwise distinct points.

By an action of $G_X$ and of $G_Y$ we may assume that $p_j' \in \{s_k'\}_{k \in \NN}$ and  $p_j'' \in \{s_k''\}_{k \in \NN}$ for all $j = 1, \dots, r$.

We can choose small flow times of $z_1 \cdots z_N \cdot \Xi_1$ and $w_1 \cdots w_N \cdot \Theta_1$ to ensure that all coordinates $p_1', \dots, p_r'$ are pairwise different, and also all coordinates $p_1'', \dots, p_r''$ are pairwise different.

By an action of $G_X$ and of $G_Y$ we may assume that $p_j' = s_j'$ and  $p_j'' = s_j''$ for all $j = 1, \dots, r$.
\end{proof}

\section{Affine modification of $\CC^n$}

\label{sec-suspension}

Let $Z \subset \CC^N$ be an affine algebraic variety and let $p \in \CC[Z]$ be a polynomial. We consider the \emph{affine modification}
\[
Z_p := \{ (x,y,z) \in \CC \times \CC \times Z \,:\, x \cdot y - p(z) = 0 \}
\]

For $k = 1, \dots, N$ we define the following vector fields on $Z_p$:
\[
\begin{alignedat}{4}
V_k &= \frac{\partial p}{\partial z_k} \cdot \frac{\partial}{\partial x} & &+ y \cdot \frac{\partial}{\partial z_k} \\
W_k &=  & \frac{\partial p}{\partial z_k} \cdot \frac{\partial}{\partial y} &+ x \cdot \frac{\partial}{\partial z_k} \\
  H &= -x \cdot \frac{\partial}{\partial x} &+ y \cdot \frac{\partial}{\partial y}
\end{alignedat}
\]
The vector fields $V_k, W_k$ are easily seen to be locally nilpotent derivations, and $H$ induces a $\CC^\ast$-action. They were already studied in the same context by Kaliman and Kutzschebauch  \cite{MR2350038}*{Lemma 2.6}. Moreover, we have that
\begin{align*}
y, z_\ell \in &\ker V_k, \qquad \ell \neq k \\
x, z_\ell \in &\ker W_k, \qquad \ell \neq k 
\end{align*}

For the commutators, we obtain:
\begin{align*}
[V_k, W_k]    &= \frac{\partial^2 p}{\partial z_k^2} \cdot H \\
[V_k, W_\ell] &= \frac{\partial^2 p}{\partial z_k \partial z_\ell} \cdot H + \frac{\partial p}{\partial z_k} \cdot \frac{\partial}{\partial z_\ell} - \frac{\partial p}{\partial z_\ell} \cdot \frac{\partial}{\partial z_k} \\
[V_k, V_\ell] &= 0 \\
[W_k, W_\ell] &= 0 \\
[V_k, H] &= -V_k\\
[W_k, H] &= W_k
\end{align*}

\begin{lemma}
\label{lem-severaldimgen}
Assume that $\frac{\partial p }{\partial z_k} \neq 0$ for every $k \in \{ 1, \dots, N \}$.
Then the Lie algebra generated by $V_k, W_k, yV_k, xW_k$ with $k \in \{ 1, \dots, N \}$ contains the vector fields
\[
y^{n} V_k, x^{n} W_k \qquad n \geq \max(0, d - 2)
\]
where $d$ is the total degree of $p$.
\end{lemma}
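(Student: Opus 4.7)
I adapt the argument behind Corollary \ref{cor-unipotentpowers} to the multi-variable setting. By the symmetry $(V_k, y) \leftrightarrow (W_k, x)$, it suffices to prove the claim for $y^n V_k$ with a fixed $k \in \{1, \ldots, N\}$. Let $d_k \in \{1, \ldots, d\}$ denote the $z_k$-degree of $p$---positive by assumption---and let $q_k$ be the coefficient of $z_k^{d_k}$ in $p$, a nonzero polynomial in the remaining variables $z_1, \ldots, \widehat{z_k}, \ldots, z_N$.

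First, I verify multi-variable analogs of Lemmas \ref{lem1}, \ref{lem3}, \ref{lem4} by the same inductive proofs. The only facts used are $V_k(y) = 0$, $V_k(z_\ell) = 0$ for $\ell \neq k$, $V_k(p) = 0$, $[V_k, V_\ell] = 0$, and $[V_k, H] = -V_k$, all of which hold in the present setting; thus formulas such as
\[
\adj^n_{V_k}(W_k) = -(n-1)\, y^{n-2}\, \frac{\partial^n p}{\partial z_k^n}\, V_k + y^{n-1}\, \frac{\partial^{n+1} p}{\partial z_k^{n+1}}\, H
\]
carry over with $\partial/\partial z$ replaced by $\partial/\partial z_k$, and likewise for $\adj^n_{yV_k}(W_k)$ and $\adj^n_{V_k}([y^j V_k, W_k])$. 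Choosing exponents so that the derivative $\partial^{d_k+1} p/\partial z_k^{d_k+1}$ vanishes kills the $H$-terms, and these formulas place vector fields of the form $y^r q_k V_k$ into the Lie algebra $\mathfrak{g}$ generated by $V_k, W_k, yV_k, xW_k$, for $r$ ranging over a contiguous interval of length $d_k + 1$.

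Next, since $[V_\ell, V_k] = 0$ and $V_\ell(y) = 0$, one has $[V_\ell, f V_k] = y (\partial f/\partial z_\ell) V_k$ for any $f$ independent of $x$; iterating realizes any monomial differential operator $y^s \partial^\beta$ (with $\beta_k = 0$ and $|\beta| = s$) applied to the coefficient of $V_k$. Picking $\beta$ so that $\partial^\beta q_k$ is a nonzero constant with $|\beta| = \deg q_k$ converts each $y^r q_k V_k$ into a nonzero scalar multiple of $y^{r + \deg q_k} V_k$, placing $y^n V_k \in \mathfrak{g}$ for every $n$ in an initial interval of length $d_k + 1$. In the generic case $\deg q_k = d - d_k$ (equivalently, some top-degree monomial of $p$ contains $z_k^{d_k}$), this interval equals $\{d-2, \ldots, d + d_k - 2\}$, and the Lemma \ref{lem4} analog followed by $V_\ell$-brackets provides a step-up by $d - 2$ in the $y$-exponent; a finite induction then produces $y^n V_k$ for every $n \geq d - 2$.

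The main obstacle is the degenerate case $\deg q_k < d - d_k$, where the initial interval starts below $d - 2$ and the direct step-up becomes too small, potentially leaving gaps. Here I invoke cross-brackets like $[x W_k, y^j V_k]$ and brackets involving the generators $V_\ell, W_\ell$ for $\ell \neq k$: on $Z_p$ the identity $xy = p$ rewrites these as expressions whose coefficients involve all monomials of $p$, not only $q_k$, so a top-degree monomial not containing $z_k^{d_k}$ can still be exploited through a suitable alternative sequence of derivations. After enough $\adj_{V_k}$-brackets to differentiate the polynomial coefficient down and $V_\ell$-brackets to simplify it further, pure $y^{j+1} V_k$-terms (and higher) emerge that fill the missing exponents. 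The technical core is a case analysis showing that these bracket operations suffice in all cases; the assumption $\partial p/\partial z_\ell \neq 0$ for every $\ell$ ensures that all generators $V_\ell, W_\ell, yV_\ell, xW_\ell$ are genuinely available, providing the flexibility needed.
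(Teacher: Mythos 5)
Your setup coincides with the paper's: you form the multi-variable analogues of Lemmas \ref{lem1}, \ref{lem3} and \ref{lem4} with $\partial/\partial z$ replaced by $\partial/\partial z_k$, kill the $H$-terms by pushing the $z_k$-derivative to order $d_k$, and then use iterated $\adj_{V_\ell}$-brackets (via $[V_\ell, fV_k] = y\,\tfrac{\partial f}{\partial z_\ell}V_k$) to differentiate the coefficient $\partial^{d_k}p/\partial z_k^{d_k} = d_k!\,q_k$ down to a nonzero constant. The genuine gap is your treatment of the case $\deg q_k < d - d_k$: there you abandon the computation and appeal to unspecified ``cross-brackets like $[xW_k, y^jV_k]$'' and a ``case analysis showing that these bracket operations suffice in all cases'' that is never carried out. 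As written, that paragraph is a statement of intent, not a proof. Moreover, the obstacle you locate there is a phantom: the lemma only asks for $y^nV_k$ with $n \geq \max(0,d-2)$, so an initial block starting at $M = d_k - 2 + \deg q_k < d-2$ is harmless (you simply obtain more powers than required), and a step-up by $M$ rather than by $d-2$ is, if anything, easier to chain. The paper makes no case distinction at all: since the \emph{same} polynomial $\partial^{d_k}p/\partial z_k^{d_k}$ appears as the coefficient in all three lemma analogues, the identical string of $\adj_{V_\ell}$'s applies each time, producing the block $y^{M}V_k,\dots,y^{M+d_k}V_k$ and then a step-up by $M$ exactly as in Corollary \ref{cor-unipotentpowers}, uniformly in both of your cases.

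A secondary problem sits in your ``generic'' case, where you do claim a complete argument: the ``finite induction'' with step $d-2$ starting from a block of length $d_k+1$ only closes if consecutive blocks overlap, i.e.\ if $d-2 \leq d_k+1$. For $d_k < d-3$ (e.g.\ $\deg p = 6$ with $\deg_{z_k}p = 1$, giving blocks $\{4,5\},\{8,9\},\dots$) your iteration leaves gaps, so you must explain how the intermediate exponents are recovered — for instance by varying the step through replacing some of the $\adj_{V_k}$'s by $\adj_{yV_k}$'s in the Lemma \ref{lem4} analogue — rather than asserting that the induction terminates.
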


\begin{proof}
We first apply Lemma \ref{lem1} for the variable $z_k$ instead of $z$. Since $V_k$ and $W_k$ do not touch the other variables $z_\ell$ for $\ell \neq k$, we obtain
\begin{align*}
\adj_{V_k^n}(W_k)        &=  -(n-1) y^{n-2} \frac{\partial^n p}{\partial z_k^n} \cdot V_k 
\end{align*}
for $n$  here being the degree of $p$ in $z_k$.

Next, we subsequently take the adjoint actions of $V_\ell$ on this result for every variable $z_\ell$ that appears, as many times as the degree of the polynomial in $z_\ell$. Note that
\[
[V_\ell, z_\ell^m y^n V_k] = y^n V_\ell(z_\ell^m) V_k = m y^{n+1} z_\ell^{m-1} V_k
\]
Hence, we obtain $y^M V_k$ for some $M \leq d-2$. Since the leading term w.r.t.\ $z_k$ might not be the leading term for the total degree, it is possible that $M < d-2$.

We proceed as in Section \ref{sec-LND}: Lemma \ref{lem3} gives
\begin{align*}
\adj^m_{V_k}(\adj^n_{yV_k}(W_k)) &= -(n+m) \cdot y^{2n-2+m} \cdot \frac{\partial^{n+m} p}{\partial z_k^{n+m}} \cdot V_k
\end{align*}
for any choice of $n$ and $m$ such that $n+m$ equals the degree of $p$ in $z_k$. 
It is important to note that for this choice, we will end up with exactly the same polynomial in $z_\ell, \ell \neq k,$ as in the previous step. Hence, by exactly the same adjoint actions of $V_\ell$ as in the previous step, namely in total $M-(n+m)+2$, we obtain $y^{M + n} V_k$ for the same $M \leq d-2$ and with $0 \leq n \leq \deg_{z_k} p$.
By the assumption $\frac{\partial p}{\partial z_k} \neq 0$ we ensure that $\deg_{z_k} p \geq 1$.

Similarly, we proceed with the applications of Lemma \ref{lem4} and Corollary \ref{cor-unipotentpowers} to finally obtain $y^M V_k, y^{M+1} V_k, y^{M+2} V_k, \dots$. Analogously, we obtain the powers of $x$ in front of $W_k$. 
\end{proof}

\begin{lemma}
Let $Z = \CC^N$ and $\{ dp = 0 \} \cap \{ p=0 \} = \emptyset$. The vector fields $\{ V_k, W_k \,:\, k = 1, \dots, N \}$ span the tangent space of every point $(x,y,z) \in Z_p$ that satisfies
\begin{enumerate}
\item $x \neq 0$ and $\exists k \in \{1, \dots, N\}$ s.t.\ $\frac{\partial p}{z_k}(z) \neq 0$, or
\item $y \neq 0$ and $\exists k \in \{1, \dots, N\}$ s.t.\ $\frac{\partial p}{z_k}(z) \neq 0$
\end{enumerate}
\end{lemma}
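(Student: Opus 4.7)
The plan is to verify this directly by linear algebra in the ambient $\CC^{N+2}$. The hypersurface $Z_p$ is cut out by $F(x,y,z) = xy - p(z)$, whose differential is
\[
dF = y\, dx + x\, dy - \sum_{k=1}^N \frac{\partial p}{\partial z_k}(z)\, dz_k.
\]
This is nonzero whenever $(x,y) \neq (0,0)$, which holds at every point satisfying either hypothesis (1) or (2). Thus such points are smooth on $Z_p$ and the tangent space has complex dimension $N+1$. The global hypothesis $\{dp = 0\} \cap \{p = 0\} = \emptyset$ is only required to ensure smoothness at points with $x = y = 0$ and $p(z) = 0$, elsewhere on $Z_p$; at the points under consideration it is automatic.

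For case (1), I would project tangent vectors onto their $z$-components in $\CC^N$. Writing $W_k = x \cdot \partial_{z_k} + \frac{\partial p}{\partial z_k} \partial_y$, the $z$-projections of $W_1, \dots, W_N$ are $x \cdot e_1, \dots, x \cdot e_N$, and since $x \neq 0$ these span $\CC^N$. Consequently $W_1, \dots, W_N$ are themselves $\CC$-linearly independent and their span projects isomorphically onto the $z$-direction. To find an $(N+1)$-th independent vector, I would take any $k_0$ with $\frac{\partial p}{\partial z_{k_0}}(z) \neq 0$ and use $V_{k_0}$: its $\partial_x$-component is $\frac{\partial p}{\partial z_{k_0}}(z) \neq 0$, whereas every $W_k$ has vanishing $\partial_x$-component. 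Hence $V_{k_0} \notin \mathrm{span}(W_1, \dots, W_N)$, and together we have $N+1$ independent tangent vectors, which must span $T_{(x,y,z)} Z_p$.

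Case (2) is the mirror image of case (1): one swaps the roles of $x$ with $y$ and of $V_k$ with $W_k$. The $z$-projections of $V_1, \dots, V_N$ are $y \cdot e_1, \dots, y \cdot e_N$, forming a basis of $\CC^N$ because $y \neq 0$, while $W_{k_0}$ supplies the missing $\partial_y$-direction since its $\partial_y$-component $\frac{\partial p}{\partial z_{k_0}}(z)$ is nonzero and no $V_k$ has any $\partial_y$-component.

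There is no genuine obstacle here; the argument is a routine dimension count using the asymmetric roles of $x$ and $y$ in $V_k$ and $W_k$. The only observation worth flagging is that smoothness at the relevant point follows automatically from $(x,y) \neq (0,0)$, so the global hypothesis plays no role in the spanning argument itself and is stated merely to make sense of the tangent space globally.
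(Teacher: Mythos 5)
Your argument is correct and is essentially the paper's own proof: in each case the $N$ vector fields of one family are independent because their $z$-components are $x\,e_k$ (resp.\ $y\,e_k$) with the relevant coordinate nonzero, and one member of the other family with $\frac{\partial p}{\partial z_{k_0}}(z)\neq 0$ is independent of them via its $\partial_x$- (resp.\ $\partial_y$-) component, giving $N+1$ independent vectors in the $(N+1)$-dimensional tangent space. Your additional observation that smoothness at such points already follows from $(x,y)\neq(0,0)$ is accurate but does not change the argument.
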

\begin{proof}
For $y \neq 0$, the vector fields $\{ V_1, \dots, V_N \}$ are linearly independent and span an $N$-dimensional subspace of the tangent space of $Z_p$. For any point $(x,y,z) \in Z_p$ where $\frac{\partial p}{z_k}(z)$ does not vanish, $W_k$ will be linearly independent from $\{ V_1, \dots, V_N \}$, and together they span the tangent space of $Z_p$. Similarly, we can argue with the roles of $V_k$ and $W_k$ reversed. 
\end{proof}

In the following, we will denote by $G_p$ the group generated by the flows of the locally nilpotent derivations $\{ V_k, W_k \,:\, k = 1, \dots, N \}$ on $Z_p$.

\begin{lemma}
\label{lem-affmodgeneralpos}
Let $p_1, \dots, p_r \in Z_p$ and $\{ dp = 0 \} \cap \{ p=0 \} = \emptyset$. Then there exists $g \in G_p$ such that for all points we have $g(p_1) \notin A, \dots, g(p_r) \notin A$, where $A:= \{ x = 0\} \cup \{ y = 0\} \cup \{ dp(z) = 0\}$. Moreover, $Z_p \setminus A$ is path-connected.
\end{lemma}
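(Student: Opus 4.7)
The lemma has two assertions; I treat them separately.

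For path-connectedness of $Z_p \setminus A$, my plan is to use explicit coordinates. The projection $(x,y,z)\mapsto(y,z)$ restricts to a biholomorphism from $Z_p\cap\{xy\neq 0\}$ onto $\CC^{*}\times(\CC^N\setminus\{p=0\})$, with inverse $(y,z)\mapsto(p(z)/y,y,z)$. Intersecting with $\{dp\neq 0\}$ then identifies $Z_p\setminus A$ with $\CC^{*}\times\bigl(\CC^N\setminus(\{p=0\}\cup\{dp=0\})\bigr)$. Since $\{p=0\}$ and $\{dp=0\}$ are proper algebraic subsets of $\CC^N$ (otherwise $Z_p\setminus A=\emptyset$ and the statement would be vacuous), their union is again proper, so its complement is a Zariski-open, hence connected and path-connected, complex submanifold of $\CC^N$; the product with $\CC^{*}$ is then path-connected.

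For the existence of $g\in G_p$, my plan is a two-stage sweep. Stage 1 uses the commuting flows of $W_1,\dots,W_N$ (commuting because $[W_k,W_\ell]=0$ from the commutator table) to move every $p_j$ off $\{y=0\}$: if $p_j=(x_j,y_j,z_j)$ has $y_j=0$, then $p(z_j)=0$, and the hypothesis $\{p=0\}\cap\{dp=0\}=\emptyset$ forces $\partial p/\partial z_k(z_j)\neq 0$ for some $k$, so the polynomial $s\mapsto y(\varphi_{W_N}^{s_N}\!\circ\cdots\circ\varphi_{W_1}^{s_1}(p_j))$ is not identically zero (a direct calculation gives it as $p(z_j+x_j s)/x_j$ when $x_j\neq 0$ and as $y_j+\sum_k s_k\,\partial p/\partial z_k(z_j)$ when $x_j=0$); its vanishing locus is therefore a proper algebraic subset of $\CC^N$. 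After Stage 1 some $x_j$ may still vanish, but this is rectified in Stage 2, which applies the commuting flows $\varphi_{V_1}^{\tau_1}\circ\cdots\circ\varphi_{V_N}^{\tau_N}$. Because $V_k(y)=0$, these flows preserve the $y$-coordinate and translate the $z$-coordinate of each $p_j$ to $z_j+y_j\tau$; since $y_j\neq 0$, the map $\tau\mapsto z_j+y_j\tau$ is an affine bijection of $\CC^N$. A Zariski-generic $\tau\in\CC^N$ keeps every $z_j+y_j\tau$ outside the proper subvariety $\{p=0\}\cup\{dp=0\}$, hence delivers $x=p(z_j+y_j\tau)/y_j\neq 0$, $y=y_j\neq 0$, and $dp\neq 0$ at every image point simultaneously.

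The main obstacle is the simultaneous genericity argument over $r$ points: in each stage the set of parameters for which a single $p_j$ lands in the forbidden locus is a proper algebraic subset of the finite-dimensional parameter space, and a finite union of $r$ such subsets is again proper, so a generic parameter works for all $r$ points at once. A secondary subtlety is that Stage 2 could conceivably drive a point back onto $\{y=0\}$ or $\{x=0\}$; this is ruled out automatically, since $V_k(y)=0$ preserves $y$ throughout and the very choice of $\tau$ avoiding $\{p=0\}$ forces $x=p(z_j+y_j\tau)/y_j\neq 0$. The composition of the chosen flows from Stages 1 and 2 then lies in $G_p$ and is the required element.
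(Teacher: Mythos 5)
Your proof is correct. It reaches the same conclusion as the paper but organizes the argument differently on both counts. For path-connectedness, the paper simply observes that $A$ is an analytic subset of complex codimension $1$ in the connected manifold $Z_p$ and invokes the general fact that removing such a set preserves (path-)connectedness, whereas you make this concrete by exhibiting the chart $(x,y,z)\mapsto(y,z)$ identifying $Z_p\setminus A$ with $\CC^{*}\times\bigl(\CC^N\setminus(\{p=0\}\cup\{dp=0\})\bigr)$; your version is more explicit but essentially equivalent. For the existence of $g$, the paper argues pointwise: it splits into the cases $xy=0$ (where $dp\neq 0$ forces some $V_k,W_k$ to have nonvanishing $\partial/\partial x$-, resp.\ $\partial/\partial y$-components) and $dp=0$ with $xy\neq 0$, and asserts that arbitrarily small generic flow times push each point off $A$ without disturbing the others. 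Your two-stage sweep --- first the commuting $W$-flows to arrange $y\neq 0$ everywhere, then the commuting $V$-flows (which preserve $y$) to arrange $p(z)\neq 0$ and $dp(z)\neq 0$, hence $x\neq 0$ --- is a cleaner global version of the same genericity mechanism: in each stage the bad parameters for each point form a proper algebraic subset of $\CC^N$, and a finite union of these is still proper. Your write-up has the advantage of making explicit two points the paper leaves implicit, namely that points already outside $A$ are not accidentally pushed back into it, and exactly which flows are composed in which order; the paper's version is shorter and avoids writing out the flow formulas. Both arguments share the same small blind spot: if $p$ is a nonzero constant then the hypothesis $\{dp=0\}\cap\{p=0\}=\emptyset$ holds vacuously while $A=Z_p$, so the moving statement degenerates; this is harmless since the lemma is only applied with $\deg p\geq 1$ and every $\partial p/\partial z_k\not\equiv 0$.
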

\begin{proof}
The set $A = \{ x = 0\} \cup \{ y = 0\} \cup \{ dp(z) = 0\}$ is of complex codimension $1$ in $Z_p$ since $\{ dp = 0 \} \cap \{ p=0 \} = \emptyset$. Hence, $Z_p \setminus A$ is path-connected.
If $a \in A$ and $x = 0 \vee y = 0$, which implies $p(z) = 0$, then $dp(z) \neq 0$. Hence, there exists $k \in \{1, \dots, N\}$ such that $V_{k,a} \neq 0$ and $W_{k,a} \neq 0$. Flowing for an arbitrarily small time along these two will move the point $a$ outside the set $A$ since $V_{k,a}$ has non-vanishing $\frac{\partial}{\partial x}$-component and $W_{k,a}$ has non-vanishing $\frac{\partial}{\partial y}$-component.
If $a \in A$ and $\{ dp(z) = 0\}$ but $x \neq 0, y \neq 0$ then flowing for an arbitrarily small time along any of the $V_k, W_k$ will move $a$ outside the set $A$ by changing the $z_k$-coordinate.
Since we are considering only finitely many points $p_1, \dots, p_r$, we therefore find an element $g \in G_p$ such that $g(p_1) \notin A, \dots, g(p_r) \notin A$.
\end{proof}

\begin{theorem}
\label{thm-affinemod}
Let $p \in \CC[z_1, \dots, z_N]$ be a polynomial of total degree $d \geq 1$ with smooth reduced zero fiber. Then the flows of the finitely many locally nilpotent derivations 
\[
\{ V_k, \; W_k, \; y V_k, \; x W_k \,:\, k = 1, \dots, N \}
\]
generate a group that acts infinitely transitively on
\[
Z_p := \{ (x,y,z) \in \CC \times \CC \times \CC^N \,:\, x y = p(z) \}
\]
\end{theorem}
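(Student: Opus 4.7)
The plan is to mirror the strategy of Theorem \ref{thm-unipotent} in the higher-dimensional setting, replacing $V,W$ by the families $V_1,\dots,V_N$ and $W_1,\dots,W_N$ and using the tools proved in this section. Let $\mathfrak{g}$ denote the Lie algebra generated by $\{V_k,W_k,yV_k,xW_k:k=1,\dots,N\}$ and $G_p$ the group generated by the flows of these locally nilpotent derivations. Infinite transitivity will follow from the claim: given pairwise distinct points $p_1,\dots,p_m,q\in Z_p$, there exists $\alpha\in G_p$ with $\alpha(p_j)=p_j$ for $j<m$ and $\alpha(p_m)=q$.

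First I would move the configuration into general position. By Lemma \ref{lem-affmodgeneralpos} I may assume that all $p_1,\dots,p_m,q$ lie outside the codimension-one set $A=\{x=0\}\cup\{y=0\}\cup\{dp=0\}$, so in particular $V_1,\dots,V_N,W_1,\dots,W_N$ span $T_{p_j}Z_p$ and $T_q Z_p$. Next I would establish a higher-dimensional analog of Lemma \ref{lem-generalpos}: the time-$t$ flow of $V_k$ sends $x\mapsto p(z+tye_k)/y$, which is a non-constant algebraic function of $t$ for any two points that differ in $z$; likewise for $W_k$ acting on $y$. Hence for a generic choice of small flow times along suitable $V_k$ and $W_k$ I can arrange that the points $p_1,\dots,p_m,q$ all have pairwise distinct $x$-coordinates and pairwise distinct $y$-coordinates while remaining outside $A$.

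Since the union of $A$ with the hypersurfaces $\{x=x(p_j)\}\cup\{y=y(p_j)\}$ for $j<m$ has complex codimension one in $Z_p$, its complement is path-connected, and I choose a path $\gamma\colon[0,1]\to Z_p$ from $p_m$ to $q$ that avoids $p_1,\dots,p_{m-1}$ and whose trace lies in this complement. For each $\tau\in[0,1]$, Lemma \ref{lem-interpolate} with a sufficiently large lower-degree bound (specifically, chosen to exceed $\max(0,d-2)$) furnishes polynomials $f(x)\in\CC[x]$ and $g(y)\in\CC[y]$ that vanish at the $x$- resp.\ $y$-coordinates of $p_1,\dots,p_{m-1}$ but not at $\gamma(\tau)$, and whose non-constant monomials all have degree $\geq\max(0,d-2)$. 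By Lemma \ref{lem-severaldimgen} and the fact that $V_k,W_k$ are themselves generators, every shear $f(x)W_k$ and $g(y)V_k$ lies in $\mathfrak{g}$ for all $k$, and the flows of these shears fix the points $p_1,\dots,p_{m-1}$ because $f$ and $g$ vanish at their $x$- and $y$-coordinates respectively.

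For each $\tau$ I pick an index $k^*$ with $(\partial p/\partial z_{k^*})(\gamma(\tau))\neq 0$, so that $V_1,\dots,V_N,W_{k^*}$ span $T_{\gamma(\tau)}Z_p$. Then the composition
\[
(t_1,\dots,t_N,s)\mapsto(\text{time-}t_N\text{ flow of }gV_N)\circ\cdots\circ(\text{time-}t_1\text{ flow of }gV_1)\circ(\text{time-}s\text{ flow of }fW_{k^*})(\gamma(\tau))
\]
is a submersion near the origin onto a neighborhood of $\gamma(\tau)$ and fixes $p_1,\dots,p_{m-1}$. Cover $\gamma([0,1])$ by finitely many such local submersive neighborhoods, giving a partition $0=\tau_0<\tau_1<\cdots<\tau_M=1$. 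On each subinterval $[\tau_j,\tau_{j+1}]$, Proposition \ref{prop-lieapprox} approximates the local submersion (which is a composition of complete flows in $\mathfrak{g}$) by a composition of flows of the generators $V_k,W_k,yV_k,xW_k$, and this approximation remains submersive onto a neighborhood of $\gamma([\tau_j,\tau_{j+1}])$. The implicit function theorem yields flow times so that the resulting element of $G_p$ maps $\gamma(\tau_j)$ to $\gamma(\tau_{j+1})$ while still fixing $p_1,\dots,p_{m-1}$. Composing these elements over $j$ produces the required $\alpha\in G_p$.

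The main obstacle is the higher-dimensional general-position step replacing Lemma \ref{lem-generalpos}: one must verify that, although $N+1$ coordinates enter and only $N$ flow directions of each type are available, the explicit formulas $x\mapsto p(z+tye_k)/y$ still separate coincident $x$-coordinates of distinct points for generic $t$, and analogously for $y$. This is a codimension-count argument handling several edge cases (two points may agree in $y$ but differ in some $z_\ell$, or agree in $y$ and all $z_k$ but then differ in $x$, etc.), but no new ideas beyond those in the two-dimensional proof are required. Once general position is secured, the remaining steps are a faithful adaptation of the proof of Theorem \ref{thm-unipotent}, now drawing on Lemmas \ref{lem-interpolate}, \ref{lem-severaldimgen}, and \ref{lem-affmodgeneralpos}.
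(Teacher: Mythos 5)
Your proposal takes a genuinely different route from the paper. The paper does not redo the path-following argument of Theorem \ref{thm-unipotent} inside $Z_p\subset\CC^{N+2}$; instead it first reduces, via Proposition \ref{prop-prodline}, to the case where no $\frac{\partial p}{\partial z_k}$ vanishes identically, and then normalizes the points one coordinate at a time: for fixed $z_\ell$, $\ell\neq k$, the fields $V_k,W_k$ restrict to a one-variable Danielewski surface, Theorem \ref{thm-unipotent} is applied slice-wise to bring every $z_k$-coordinate to $0$, and the points end up on the conic $xy=p(0)$ where Theorem \ref{thm-unipotent} is applied once more. Note that you invoke Lemma \ref{lem-severaldimgen} without its hypothesis that $\frac{\partial p}{\partial z_k}\neq 0$ for every $k$; you would need to add the paper's reduction to the product case $\widetilde{Z_p}\times\CC$ before that lemma is available.

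The serious problem, however, is the general-position step that you yourself flag as ``the main obstacle'' and then dismiss as requiring no new ideas. It is not a codimension count, and the mechanism you propose fails. As soon as $N\geq 2$ there exist distinct points $P=(x,y,z)$ and $Q=(x,y,z')$ with $z\neq z'$ and $p(z)=p(z')=xy$. The time-$t$ flow of $V_k$ sends their $x$-coordinates to $p(z+tye_k)/y$ and $p(z'+tye_k)/y$ (with $e_k$ the $k$-th standard basis vector), and these coincide for \emph{all} $t$ whenever the restriction of $p$ to lines in direction $e_k$ does not distinguish $z$ from $z'$. For $d=1$ this always happens: $p(z+v)-p(z)$ is independent of the base point, so every generator $V_k,W_k,yV_k,xW_k$ --- and every shear $f(y)V_k$, $g(x)W_k$, since $f(y_P)=f(y_Q)$ and $g(x_P)=g(x_Q)$ --- displaces $P$ and $Q$ by identical increments in $x$ and in $y$. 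Hence the coincidence $x_P=x_Q$, $y_P=y_Q$ is invariant under the entire group you generate (try $p=z_1+z_2$, $P=(1,1,0,1)$, $Q=(1,1,1,0)$): no choice of flow times separates the coordinates, your interpolating polynomials $f,g$ that vanish at the coordinates of $p_1,\dots,p_{m-1}$ then also vanish at $p_m$, and the path $\gamma$ cannot even leave its starting point while avoiding $\{x=x_{p_j}\}\cup\{y=y_{p_j}\}$. So the step you wave through is exactly where the higher-dimensional difficulty lives; it cannot be repaired within your framework for $d=1$, $N\geq 2$, and for $d\geq 2$ it still requires an actual argument that some composition of the available flows separates such coincident pairs. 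This is the reason to follow the paper's slice-by-slice reduction rather than to force the configuration into $x$- and $y$-general position.
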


\begin{proof}
After an affine-linear change of coordinates, we may assume without loss of generality that $p(0) = 1$ which will be used at the end of the proof.
If $\frac{\partial p }{\partial z_k} = 0$ for some $k$, then $Z_p$ is naturally isomorphic to a direct product $\widetilde{Z_p} \times \CC$ where $\widetilde{Z_p}$ is given by the polynomial $p$, but considered in $\CC[z_1, \dots, z_{k+1}, z_{k+1}, \dots, N]$ instead. By Proposition \ref{prop-prodline} we may therefore assume without loss of generality that $\frac{\partial p }{\partial z_k}$ does not vanish identically for any $k = 1, \dots, N$. We can therefore apply Lemma \ref{lem-severaldimgen} and obtain all vector fields of the form
\[
\{ y^n V_k, \; x^n W_k \,:\, k = 1, \dots, N, \; n \geq \max(0, d - 2) \}
\]
Let $p_1, \dots, p_r \in Z_p$ pairwise distinct points. By Lemma \ref{lem-affmodgeneralpos} we may further assume that $p_1, \dots, p_r \in Z_p \setminus A$ where $A = \{ x = 0\} \cup \{ y = 0\} \cup \{ dp(z) = 0\}$. 

Note that the vector fields $V_k$ and $W_k$ do not change any of the coordinates $z_\ell$ for $\ell \neq k$. For fixed $z_\ell, \ell \neq k,$ we therefore obtain a Danielewski surface, and we can apply Theorem \ref{thm-unipotent} to change all the $z_k$-coordinates of $g(p_1), \dots, g(p_r)$ to zero for each $k$. These points lie on the curve $x y = p(0) = 1, z_1 = \dots = z_N = 0$. We may now assume that this curve is contained in Danielewski surface for $N=1$ where we can move them using $y^n V_1$ and $x^n W_1$ to a standard set of points, say $(x,y,z_1) = (1,1,0), (2,1/2,0), (3, 1/3, 0)$ etc.
This can be obtained again by Theorem \ref{thm-unipotent}. 
\end{proof}

\section*{Funding}

The author was supported by the European Union (ERC Advanced grant HPDR, 101053085 to Franc Forstneri\v{c}) and grant N1-0237 from ARRS, Republic of Slovenia.

\begin{bibdiv}
\begin{biblist}

\bib{finitelie}{article}{
   author={Andrist, Rafael B.},
   title={Integrable generators of Lie algebras of vector fields on $\mathbb{C}^n$},
   journal={Forum Math.},
   date={2019},
   volume={31},
   number={4},
   pages={943--949},
   doi={10.1515/forum-2018-0204},
}

\bib{finiteliebis}{article}{
   author={Andrist, Rafael B.},
   title={Integrable generators of Lie algebras of vector fields on $\mathrm{SL_2}(\mathbb{C})$ and on $xy = z^2$},
   date={2023},
   journal={J. Geom. Anal.},
   number={33},
   doi={10.1007/s12220-023-01294-x},
}

\bib{MR3949984}{article}{
   author={Arzhantsev, I.},
   author={Kuyumzhiyan, K.},
   author={Zaidenberg, M.},
   title={Infinite transitivity, finite generation, and Demazure roots},
   journal={Adv. Math.},
   volume={351},
   date={2019},
   pages={1--32},
   issn={0001-8708},
   review={\MR{3949984}},
   doi={10.1016/j.aim.2019.05.006},
}

\bib{MR3700709}{book}{
   author={Forstneri\v c, Franc},
   title={Stein manifolds and holomorphic mappings},
   series={Ergebnisse der Mathematik und ihrer Grenzgebiete. 3. Folge. A
   Series of Modern Surveys in Mathematics [Results in Mathematics and
   Related Areas. 3rd Series. A Series of Modern Surveys in Mathematics]},
   volume={56},
   edition={2},
   note={The homotopy principle in complex analysis},
   publisher={Springer, Cham},
   date={2017},
   pages={xiv+562},
   isbn={978-3-319-61057-3},
   isbn={978-3-319-61058-0},
   review={\MR{3700709}},
   doi={10.1007/978-3-319-61058-0},
}

\bib{MR4440754}{article}{
   author={Forstneri\v c, F.},
   author={Kutzschebauch, F.},
   title={The first thirty years of Anders\'en-Lempert theory},
   journal={Anal. Math.},
   volume={48},
   date={2022},
   number={2},
   pages={489--544},
   issn={0133-3852},
   review={\MR{4440754}},
   doi={10.1007/s10476-022-0130-1},
}

\bib{MR2350038}{article}{
   author={Kaliman, Shulim},
   author={Kutzschebauch, Frank},
   title={Density property for hypersurfaces $UV=P(\overline X)$},
   journal={Math. Z.},
   volume={258},
   date={2008},
   number={1},
   pages={115--131},
   issn={0025-5874},
   review={\MR{2350038}},
   doi={10.1007/s00209-007-0162-z},
}

\bib{MR2660454}{article}{
   author={Kaliman, Shulim},
   author={Kutzschebauch, Frank},
   title={Algebraic volume density property of affine algebraic manifolds},
   journal={Invent. Math.},
   volume={181},
   date={2010},
   number={3},
   pages={605--647},
   issn={0020-9910},
   review={\MR{2660454}},
   doi={10.1007/s00222-010-0255-x},
}

\bib{MR1669174}{article}{
   author={Kaliman, Sh.},
   author={Zaidenberg, M.},
   title={Affine modifications and affine hypersurfaces with a very
   transitive automorphism group},
   journal={Transform. Groups},
   volume={4},
   date={1999},
   number={1},
   pages={53--95},
   issn={1083-4362},
   review={\MR{1669174}},
   doi={10.1007/BF01236662},
}

\bib{MR2823038}{article}{
   author={Kutzschebauch, Frank},
   author={Lind, Andreas},
   title={Holomorphic automorphisms of Danielewski surfaces I---density of
   the group of overshears},
   journal={Proc. Amer. Math. Soc.},
   volume={139},
   date={2011},
   number={11},
   pages={3915--3927},
   issn={0002-9939},
   review={\MR{2823038}},
   doi={10.1090/S0002-9939-2011-10855-4},
}

\bib{MR3495426}{article}{
   author={Kutzschebauch, Frank},
   author={Leuenberger, Matthias},
   title={The Lie algebra generated by locally nilpotent derivations on a
   Danielewski surface},
   journal={Ann. Sc. Norm. Super. Pisa Cl. Sci. (5)},
   volume={15},
   date={2016},
   pages={183--207},
   issn={0391-173X},
   review={\MR{3495426}},
}

\bib{MR1423629}{article}{
   author={Kraft, Hanspeter},
   title={Challenging problems on affine $n$-space},
   note={S\'eminaire Bourbaki, Vol. 1994/95},
   journal={Ast\'erisque},
   number={237},
   date={1996},
   pages={Exp. No. 802, 5, 295--317},
   issn={0303-1179},
   review={\MR{1423629}},
}

\bib{MR1721579}{article}{
   author={Varolin, Dror},
   title={A general notion of shears, and applications},
   journal={Michigan Math. J.},
   volume={46},
   date={1999},
   number={3},
   pages={533--553},
   issn={0026-2285},
   review={\MR{1721579}},
   doi={10.1307/mmj/1030132478},
}

\bib{MR1829353}{article}{
   author={Varolin, Dror},
   title={The density property for complex manifolds and geometric
   structures},
   journal={J. Geom. Anal.},
   volume={11},
   date={2001},
   number={1},
   pages={135--160},
   issn={1050-6926},
   review={\MR{1829353}},
   doi={10.1007/BF02921959},
}

\bib{MR1785520}{article}{
   author={Varolin, Dror},
   title={The density property for complex manifolds and geometric
   structures. II},
   journal={Internat. J. Math.},
   volume={11},
   date={2000},
   number={6},
   pages={837--847},
   issn={0129-167X},
   review={\MR{1785520}},
   doi={10.1142/S0129167X00000404},
}

\end{biblist}
\end{bibdiv}

\end{document}